\makeatletter\@addtoreset{equation}{section}\makeatother
\newcommand{\e}{\varepsilon}
\renewcommand{\phi}{\varphi}
\newcommand{\ssup}[1] {{\scriptscriptstyle{({#1}})}}
\newcommand{\gs}{\gamma_{\rm s}}
\newcommand{\gb}{\gamma_{\rm b}}
\newcommand{\C}{\mathbb C}
\newcommand{\R}{\mathbb R}
\newcommand{\Z}{\mathbb Z}
\newcommand{\N}{\mathbb N}
\newcommand{\E}{\mathbb E}
\renewcommand{\P}{\mathbb P}
\newtheorem{theorem}{Theorem}[section]
\newtheorem{lemma}[theorem]{Lemma}
\newtheorem{prop}[theorem]{Proposition}
\def\1{{\mathchoice {1\mskip-4mu\mathrm l}      
{1\mskip-4mu\mathrm l}
{1\mskip-4.5mu\mathrm l} {1\mskip-5mu\mathrm l}}}
\renewcommand{\subsection}{\secdef \subsct\sbsect}
\newcommand{\subsct}[2][default]{\refstepcounter{subsection}
\vspace{0.15cm}
{\flushleft\bf \arabic{section}.\arabic{subsection}~\bf #1  }
\nopagebreak\nopagebreak}
\newcommand{\sbsect}[1]{\vspace{0.1cm}\noindent
{\bf #1}\vspace{0.1cm}}
\newcounter{remnr}
\newenvironment{remark}{\refstepcounter{remnr}
{\sf Remark~\arabic{remnr}.\ }\nopagebreak  }%
{\nopagebreak {\hfill{$\diamond$}}\\ }
\renewcommand{\phi}{\varphi}
\renewcommand{\P}{\mathbb{P}}
\renewcommand{\E}{\mathbb{E}}
\newcommand{\ZZ}{\mathcal{Z}}
\newcommand{\WW}{\mathcal{W}}
\newcommand{\VV}{\mathcal{V}}
\begin{document}

\title
{Small deviations of a Galton--Watson process with immigration}

\author[Nadia Sidorova]{}

\maketitle

\centerline{\sc Nadia Sidorova\footnote{Department of Mathematics, University College London, Gower Street, London WC1 E6BT, UK, {\tt n.sidorova@ucl.ac.uk}.\\
The author was supported by
the Leverhulme Research Grant RPG-2012-608.
} }

\vspace{0.4cm}


\vspace{0.4cm}

\begin{quote}
{\small {\bf Abstract:} We consider a Galton--Watson process with immigration $( \mathcal{Z}_n)$, 
with offspring probabilities $(p_i)$ and immigration probabilities $(q_i)$. In the case when $p_0=0$, $p_1\neq 0$, $q_0=0$
(that is, when $\text{essinf} (\mathcal{Z}_n)$ grows linearly in $n$), 
we establish the asymptotics of the left tail $\P\{\mathcal{W}<\e\}$, as $\e\downarrow 0$, of the martingale limit 
$\mathcal{W}$ of the process $( \mathcal{Z}_n)$. Further, we consider the first generation $\mathcal{K}$ such that 
$\mathcal{Z}_{\mathcal{K}}>\text{essinf} (\mathcal{Z}_{\mathcal{K}})$ and study the asymptotic behaviour of $\mathcal{K}$
conditionally on $\{\mathcal{W}<\e\}$, as $\e\downarrow 0$. We find the scale at which $\mathcal{K}$ goes to infinity and 
describe the fluctuations of $\mathcal{K}$ around that scale. Finally, we compare the results with those for standard Galton--Watson 
processes. 
}
\end{quote}
\vspace{5ex}

{\small {\bf AMS Subject Classification:} Primary 60J80.
Secondary 60F10.

{\bf Keywords:} Galton--Watson processes, Galton--Watson trees, immigration, martingale limit, conditioning, lower tail, 
small value probabilities, large deviations.}
\vspace{4ex}



\section{Introduction and main results}
\medskip

\subsection{Galton--Watson process without immigration} 
\medskip

\noindent
Let $(Z_n:n\ge 0)$ be a supercritical Galton--Watson process with
a non-degenerate offspring random variable $X$. We denote the   
offspring probabilities by $(p_k: k\ge 0)$ and assume that $p_0=0$. 
Further, we assume that the process starts with one ancestor, i.e.\ $Z_0=1$, 
and denote by $a=\E X>1$ the average offspring number.
\medskip

It is well known by  the Kesten--Stigum
theorem 
that under the condition ${\E X\log X<\infty}$ the \emph{martingale limit} 
\begin{align*}
W=\lim_{n\to\infty}\frac{Z_n}{a^n} 
\end{align*}
exists and is strictly positive almost surely. Moreover, the random variable $W$ has a strictly positive 
continuous density, see~\cite{AN}.  
However, only in a very limited number of examples can
the distribution of $W$ be computed explicitly, and one has to rely on asymptotic 
results to describe the behaviour of $W$. 
\medskip

The \emph{left tail asymptotics}  $\P\{W<\e\}$ as $\e\downarrow 0$ of $W$ has 
attracted a lot of mathematical attention, both 
in its own right and in the broader 
context of \emph{small value problems}, see~\cite{lifshits}. 
Since small values of the martingale limit $W$ correspond to sub-average branching of the Galton--Watson tree, 
one naturally has to distinguish between the~\emph{Schr\" oder case} when $p_1>0$
and the \emph{B\" ottcher case} when  $p_1=0$. In the Schr\" oder case, small values of $Z_n$ are achieved
much more easily and, in particular, the minimal value of $Z_n$ is
equal to one with positive probability. In sharp contrast to that, in the B\" ottcher case the minimal tree grows
exponentially.   
\medskip

The Schr\" oder case was first studied in~\cite{dubuc2}, where it was shown that 
\begin{align*}
\E e^{-sW}\asymp s^{-\tau}\qquad \text{ as }s\to\infty,
\end{align*} 
with $\tau=-\frac{\log p_1}{\log a}$. Shortly after that in~\cite{dubuc} it was proved that the density $w$ of $W$
decays at zero as $w(\e)\asymp \e^{\tau-1}$, which easily implies 
\begin{align*}
\P\{W<\e\}\asymp \e^{\tau}\qquad\text{ as }\e\downarrow 0.
\end{align*}
The asymptotics for the density $w$ was then refined in~\cite{bb93} to 
$w(\e) \sim \hat L(\e)\e^{\tau-1}$ 
with an analytic multiplicatively periodic function $\hat L$,
which led to  the corresponding improvement of the left tail asymptotics 
\begin{align}
\label{s}
\P\{W<\e\}\sim L(\e)\e^{\tau}\qquad\text{ as }\e\downarrow 0
\end{align} 
with another analytic multiplicatively periodic function $L$.
\medskip

In the B\"ottcher case, 
the situation is more complicated as the tail of $W$ decays exponentially. It was shown in~\cite{dubuc2} 
that the Laplace transform of $W$ at infinity has the logarithmic asymptotics 
\begin{align*}
\log \E e^{-s W}\asymp s^{\beta}\qquad\text{ as }s\to\infty,
\end{align*}
where $\beta=\frac{\log \mu}{\log a}$ with 
$\mu=\min\{n: p_n>0\}\ge 2$ being the minimal offspring number.
This suggested the 
logarithmic asymptotics
\begin{align}
\label{b}
\log\P\{W<\e\}\asymp \e^{-\frac{\beta}{1-\beta}}\qquad\text{ as }\e\downarrow 0
\end{align}
and a more precise result was then obtained in~\cite{bb93}, namely, that 
\begin{align*}
\phantom{aaaa}\log \P\{W<\e\}\sim -M(\e)\e^{-\frac{\beta}{1-\beta}}\qquad\text{ as }\e\downarrow 0,
\end{align*}
where $M$ is an analytic positive multiplicatively periodic function. 
A numerical example with tiny but non-trivial oscillations in $M$ was provided in~\cite{bp}, 
and an example with a constant $M$ was given in~\cite{hambly}. Finally, the full left tail asymptotics
was  computed in~\cite{FW09} to be 
\begin{align*}
\P\{W<\e\}\sim \hat M(\e)\e^{\frac{\beta}{2(1-\beta)}}\exp\big\{-M(\e)\e^{-\frac{\beta}{1-\beta}}\big\}
\qquad\text{ as }\e\downarrow 0,
\end{align*}
where both $M$ and $\hat M$ are analytic, positive, and multiplicatively periodic. 
\medskip

The precise form of the above asymptotics as well as the approach developed in~\cite{FW09} made it 
possible to understand the influence of small values of $W$ on the Galton--Watson tree. 
Let  
\begin{align*}
K=\min\{n: Z_n>\mu^n\}
\end{align*}
be the first generation, where a vertex has more than the minimal number of offspring. 
We will call this event the first non-trivial branching of the tree. 
It was shown in~\cite{bgms} that, conditionally on $W<\e$,  
the first branching time $K$ will grow
in the Schr\" oder case as
\begin{align}
\label{gammas}
\gs(\e)=\frac{\log (1/\e)}{\log a} 
\end{align}
and  in the B\" ottcher case as
\begin{align*}
\gb(\e)=\frac{\log (1/\e)}{\log (a/\mu)} -\frac{\log\log(1/\e)}{\log a}+H(\e)
\end{align*}
for some continuous multiplicatively periodic function $H$. However, more striking are the 
fluctuations of $K$. It was proved in~\cite{bgms} that in the B\" ottcher case there 
are \emph{no fluctuations} at all, with $K$ being equal to either $\lfloor \gb(\e)\rfloor$ or $\lfloor \gb(\e)\rfloor+1$
with probability tending to $1$, conditionally on $W<\e$. This is no longer true for the Schr\" oder case. 
Our first result below shows that the random variable $K-\gs(\e)$
conditioned on $W<\e$ has \emph{exponentially} decaying left and right tails. 

\begin{theorem} 
\label{main0}
In the Schr\" oder case, as $x\to\infty$,  
\begin{align*}
\liminf_{\e\downarrow 0}\P\big\{K- \gs(\e)>x\, \big|\, W<\e\big\}\asymp 
\limsup_{\e\downarrow 0}\P\big\{K- \gs(\e) >x\, \big|\, W<\e\big\}\asymp p_1^x
\end{align*}
and 
\begin{align*}
\liminf_{\e\downarrow 0}\P\big\{K-\gs(\e) <-x\, |\, W<\e\big\}\asymp 
\limsup_{\e\downarrow 0}\P\big\{K-\gs(\e) <-x\, |\, W<\e\big\}\asymp p_1^{(\lambda-1)x},
\end{align*}
where $\lambda=\min\{k>1: p_k\neq 0\}$.  
\end{theorem}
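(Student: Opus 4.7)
\textbf{Proof plan for Theorem~\ref{main0}.}

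The plan is to exploit the decomposition of $W$ at the first non-trivial branching time $K$. In the Schr\"oder case $\mu=1$, so $Z_n=1$ for all $n<K$, and the $Z_K$ individuals at generation $K$ start $Z_K$ independent copies of the tree. This yields $W=a^{-K}V$, where
\begin{equation*}
V=\sum_{i=1}^{Z_K}W_i,
\end{equation*}
$K$ is geometric on $\{1,2,\dots\}$ with parameter $1-p_1$, the $W_i$ are i.i.d.\ copies of $W$, and $K$, $Z_K$, $(W_i)_{i\ge1}$ are mutually independent; in particular $V$ is independent of $K$. Writing
\begin{equation*}
\P\{K-\gs(\e)>x,\,W<\e\}=\sum_{k>\gs(\e)+x}\P\{K=k\}\,\P\{V<\e a^k\},
\end{equation*}
and an analogous sum for the left tail, the analysis reduces to estimating these sums and dividing by $\P\{W<\e\}\sim L(\e)\e^\tau$ from~\eqref{s}. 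Two identities are used throughout: $p_1^{\gs(\e)}=\e^\tau$ and $a^{\lambda\tau}=p_1^{-\lambda}$.

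For the right tail the upper bound is immediate from $\P\{V<\e a^k\}\le 1$, giving $\P\{K-\gs(\e)>x,\,W<\e\}\le\P\{K>\gs(\e)+x\}\asymp\e^\tau p_1^x$. For the matching lower bound I observe that whenever $k>\gs(\e)+x$ one has $\e a^k>a^x$, hence $\P\{V<\e a^k\}\ge\P\{V<a^x\}\to 1$ as $x\to\infty$; factoring this out yields $\P\{K-\gs(\e)>x,\,W<\e\}\ge (1-o_x(1))\,\e^\tau p_1^x$. Dividing by $L(\e)\e^\tau$ and using that $L$ is bounded between positive constants then sandwiches both $\liminf_{\e\downarrow 0}$ and $\limsup_{\e\downarrow 0}$ of the conditional probability between constant multiples of $p_1^x$.

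The left tail rests on the small-value estimate $\P\{V<s\}\asymp s^{\lambda\tau}$ as $s\downarrow 0$. Its upper bound follows from
\begin{equation*}
\P\{V<s\}\le\sum_{\ell\ge\lambda}\P\{Z_K=\ell\}\P\{W<s\}^\ell\le\P\{W<s\}^\lambda\asymp s^{\lambda\tau},
\end{equation*}
and its lower bound from $\P\{V<s\}\ge\P\{Z_K=\lambda\}\P\{W<s/\lambda\}^\lambda\asymp s^{\lambda\tau}$, both using~\eqref{s}. Inserted into the left-tail sum,
\begin{equation*}
\P\{K=k\}\,\P\{V<\e a^k\}\asymp p_1^{k-1}(\e a^k)^{\lambda\tau}=p_1^{-1}\e^{\lambda\tau}p_1^{-(\lambda-1)k},
\end{equation*}
which is increasing in $k$ since $\lambda\ge 2$, so the sum over $k<\gs(\e)-x$ is dominated by $k=\lfloor\gs(\e)-x\rfloor$ and evaluates to order $\e^\tau p_1^{(\lambda-1)x}$. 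Division by $\P\{W<\e\}\asymp\e^\tau$ then produces the claimed $\asymp p_1^{(\lambda-1)x}$.

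The main point to watch is the interplay of two bounded oscillations that separate $\liminf$ from $\limsup$: the multiplicatively periodic $L$ from~\eqref{s}, and the fractional part of $\gs(\e)\pm x$ entering through $p_1^{\lfloor\gs(\e)\pm x\rfloor}$. Both stay between positive constants as $\e\downarrow 0$, which is precisely what the $\asymp$ form of the statement allows. The only serious input beyond elementary calculation is the small-value estimate for $V$; once that is in place, the rest is a geometric-sum computation.
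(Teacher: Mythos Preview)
Your argument is correct and in fact more direct than the paper's. The paper does not decompose at the random time $K$; instead it conditions on $\{Z_{\lfloor\gs\rfloor+x}=1\}$ to obtain
\[
\P\{K>\gs+x\mid W<\e\}\sim \frac{p_1^{-\{\gs\}+x}}{L(a^{-\{\gs\}})}\,\P\{\hat W<a^{-\{\gs\}+x}\},
\]
and then recovers the left tail by taking $1-\P\{K>\gs-x\mid W<\e\}$. That subtraction forces the paper to know $\P\{W<\e\}$ to second order: it proves a separate lemma (Lemma~\ref{secsec}) stating $L(\e)\e^\tau-\P\{W<\e\}\asymp\e^{\tau\lambda}$, whose proof goes through the Laplace inversion formula and the asymptotics of the remainder $R_n$ in the Schr\"oder expansion. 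Your decomposition $W=a^{-K}V$ with $V$ independent of $K$ lets you attack $\P\{K<\gs-x,\,W<\e\}$ directly as a geometric sum, and the only input you need is the first-order bound $\P\{W<\e\}\asymp\e^\tau$, from which $\P\{V<s\}\asymp s^{\lambda\tau}$ follows by the sandwich you wrote. What the paper's route buys is the second-order asymptotic for $\P\{W<\e\}$ as a result of independent interest; what yours buys is a shorter, self-contained proof of the theorem that avoids contour integration entirely.
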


\begin{remark}
It was shown in~\cite{bgms} that both tails of $K-\gs(\e)$ conditioned on $W<\e$ are not heavier than 
exponential but it was not known whether this estimate is sharp for either of the tails, and if so what the correspondent 
exponents and rates are.  
To find the actual tail behaviour, we had to control the error term of the 
asymptotics~\eqref{s}.
\end{remark}

\begin{remark}
Under the conditioning $W<\e$ there are two competing effects influencing the behaviour of $K$: 
branching too early would lead to higher values of $W$ but on the other hand it would be probabilistically expensive 
to suppress branching for too long. Having first 
branching in generation roughly equal to $\gs(\e)$ corresponds to the optimal 
trade-off between these two effects. The right tail of $K-\gs(\e)$ corresponds to a late branching, and its decay is 
given by the probability of having just one offspring in many generations, which is exponential with exponent $p_1$. 
The left tail corresponds to an early branching, which manifests itself in the appearance of extra $(\lambda-1)$ offspring 
too early. The left tail is therefore controlled by the probability of keeping the sum of $\lambda$ (rather than one) i.i.d.\
copies of $W$ small, which explains the exponent $p^{\lambda-1}$ governing the left tail.  
\end{remark}

\bigskip


\subsection{Galton--Watson process with immigration}
\medskip

The remarkable difference in fluctuations of $K$ in the Schr\" oder and B\" ottcher cases 
is due to the fact that in the former setting the minimal tree does not grow at all, having just one offspring in every generation, 
whereas in the latter one the minimal tree grows exponentially. 
A natural question to ask is what happens if the process behaves similarly to the Galton--Watson process but
its minimum grows linearly. Galton--Watson process with immigration is a natural example of such a process.  
\medskip

Following the definition 
in~\cite{AN}, we fix a non-degenerate offspring random variable $X$ with distribution $(p_k, k\ge 0)$ as before, 
and an immigration random variable $Y$ with distribution $(q_k:k\ge 0)$. 
We define the Galton--Watson process with immigration $(\ZZ_n:n\ge 0)$ recursively by setting $\ZZ_0=Y_0$ and
\begin{align*}
\ZZ_{n+1}=X_1^{\ssup n}+\cdots+X_{\ZZ_n}^{\ssup n}+Y_{n+1}, \qquad n\ge 0, 
\end{align*}
where all $X_i^{\ssup n}$
are independent and identically distributed with  the same distribution as $X$, 
all $Y_j$
are independent and identically distributed with the same distribution as $Y$, 
and all $X_i^{\ssup n}$ and $Y_j$ are independent. In other words, the Galton--Watson 
process with immigration $(\ZZ_n)$
differs from the ordinary Galton--Watson process with offspring probabilities $(p_k)$ by  
the property that, in generation $n$, there is an immigration of a random number of individuals into the population
governed by immigration probabilities $(q_k)$ and independent of the rest of the process. 
\medskip

As before, we assume that $p_0=0$. We also assume that $p_1>0$ as otherwise the linear effect of immigration will be negligible 
with respect to the exponential growth of the population.  For the immigration probabilities, we assume that $q_0=0$ in order to avoid the extinction and sub-linear growth of the minimal tree. 
\medskip

We assume that 
\begin{align*}
\E X\log X<\infty \qquad\text{ and }\qquad \E\log Y<\infty
\end{align*}
and denote $a=\E X$, which is finite by the first condition above and greater than one since $p_0=0$. 
It is a classical result, see
~\cite{Se} for example, 
that under the conditions above the limit 
\begin{align*}
\WW=\lim_{n\to\infty}\frac{\ZZ_n}{a^n}
\end{align*}
exists and is positive almost surely.
\medskip

The following logarithmic left tail asymptotic was recently computed
in~\cite{LI}. As $\e\downarrow 0$,  
\begin{align}
\label{li_ass}
\log \P\{\WW<\e\}\sim -\sigma\log^2 (1/\e), 
\end{align}
where 
\begin{align*}
\sigma=\frac{\nu\log (1/p_1)}{2\log^2 a}
\end{align*}
and
\begin{align*}
\nu=\min\{i: q_i>0\}\ge 1
\end{align*}
is the minimal number the immigration random variable can take with positive probability. As it was natural to expect, the 
left tail of $\WW$ is thinner than that of the martingale limit $W$ in the Schr\" oder case~\eqref{s} but thicker than that of $W$ 
in the B\"ottcher case~\eqref{b}. The above result was then generalised to multitype processes in~\cite{CHU}. 

\bigskip


\subsection{Main results}
\medskip

The aim of this paper is to find the full (non-logarithmic) left tail asymptotics of $\WW$ at zero and to describe the time of the first non-trivial branching  
\begin{align*}
\mathcal{K}=\min\{n: \ZZ_n>\nu(n+1)\}
\end{align*}
conditioned on $\WW<\e$ in the limit as $\e\downarrow 0$.  In particular, we want to compare the fluctuations of $\mathcal{K}$
around its typical growth with those for Galton--Watson processes without immigration in the Schr\" oder and B\" ottcher cases.
\medskip 

Let $\omega$ be the function defined implicitly in 
a right neighbourhood of zero by
\begin{align}
\label{omega}
\omega(\e)-\log\omega(\e)+\log\log a=\log(1/\e).
\end{align}
In the sequel we will drop $\e$ in most notation and, in particular, in $\omega\equiv \omega(\e)$, if there is no risk of confusion. We will also assume that $\e$ is sufficiently small. 


\begin{theorem} 
\label{main1}
As $\e\downarrow 0$, 
\begin{align}
\label{asy0}
\P\big\{\WW<\e\big\}\sim 
\exp\Big\{-\sigma\omega^2+\omega M_1(\omega)-\frac{1}{2}\log \omega+M_2(\omega)\Big\}, 
\end{align}
where $M_1$ and $M_2$ are bounded functions periodic with respect to $\omega$ with period $\log a$.  
\end{theorem}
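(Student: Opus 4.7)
The plan is to derive the asymptotics of $\P\{\WW<\e\}$ by saddle-point inversion of the Laplace transform $\phi(s)=\E e^{-s\WW}$. The starting point is the additive representation
\begin{align*}
\WW = \sum_{k=0}^{\infty} a^{-k}\sum_{j=1}^{Y_k} W^{(k,j)},
\end{align*}
where the $W^{(k,j)}$ are i.i.d.\ copies of the Schr\"oder-case martingale limit of an ordinary Galton--Watson tree rooted at a single ancestor and the $Y_k$ are i.i.d.\ copies of $Y$, independent of everything. This identity follows by splitting $\ZZ_n$ into the contributions of the immigrant cohorts at times $0,1,\dots,n$ and letting $n\to\infty$. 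Independence then yields the product formula
\begin{align*}
\phi(s)=\prod_{k=0}^{\infty} G_Y\!\bigl(f(sa^{-k})\bigr),
\end{align*}
with $G_Y(z)=\sum_k q_k z^k$ the generating function of $Y$ and $f(t)=\E e^{-tW}$.

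The second and most delicate step is a sharp expansion of $\log\phi(s)$ as $s\to\infty$. I would use the Laplace-side counterpart of~\eqref{s}, namely $f(t)\sim \hat L(t)\,t^{-\tau}$ with $\tau=-\log p_1/\log a$ and $\hat L$ analytic and multiplicatively $a$-periodic, together with $G_Y(z)=q_\nu z^\nu(1+O(z))$ near $0$ and $\log G_Y(f(t))=O(t)$ for small $t$. Splitting the infinite product at the crossover $k_*\approx \log s/\log a$ where $f(sa^{-k})$ transits from small to near $1$, and exploiting the fact that $\hat L(sa^{-k})=\hat L(s)$ is independent of $k$ to collapse the relevant part of the sum, one arrives at
\begin{align*}
\log\phi(s)=-\sigma\log^2 s+\log s\cdot \Psi_1(\log s)+\Psi_2(\log s)+o(1),
\end{align*}
with $\Psi_1,\Psi_2$ bounded and $(\log a)$-periodic; the periodicity encodes both the discreteness of the cohort sum and that of $\hat L$. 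Pushing the asymptotics to $o(1)$ precision, as opposed to merely recovering the leading $-\sigma\log^2 s$ of~\cite{LI}, is the chief technical hurdle: one has to carry the full expansion of $f$ together with its error term, control the boundary contribution near $k_*$, and verify that all remainders decay uniformly in the phase $\log s\bmod\log a$.

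The final step is the saddle-point inversion. A Chernoff upper bound $\P\{\WW<\e\}\le e^{s\e}\phi(s)$, matched by a lower bound from exponential tilting of $\WW$ by $e^{-s^*\WW}$, reduces the problem to minimising $s\e+\log\phi(s)$. Differentiating the leading part gives the saddle-point equation $\e=2\sigma\log s^*/s^*$, which after setting $\omega=\log s^*$ and reshuffling the additive constant $\log(2\sigma\log a)$ becomes $\omega-\log\omega+\log\log a=\log(1/\e)$; this is precisely the implicit definition~\eqref{omega}, the particular constant $\log\log a$ being chosen so that the resulting linear-in-$\omega$ contributions are $(\log a)$-periodic rather than secular. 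Substituting $s^*=e^\omega$ into $s^*\e+\log\phi(s^*)$ yields $-\sigma\omega^2+\omega\Psi_1(\omega)+\Psi_2(\omega)+O(1)$, and the standard Gaussian integration around the saddle contributes an extra prefactor whose logarithm, via $(\log\phi)''(s^*)\sim 2\sigma\omega/(s^*)^2$, evaluates to $-\tfrac{1}{2}\log\omega+\mathrm{const}+o(1)$. Collecting the linear-in-$\omega$ terms into $\omega M_1(\omega)$ and the bounded terms into $M_2(\omega)$ gives~\eqref{asy0}. A secondary technical point is that $\log\phi(s)$ is only smooth modulo the periodic fluctuations $\Psi_1,\Psi_2$, so the Gaussian saddle-point estimate must be justified either by a Mellin-transform or smoothing argument, or by bracketing $\phi(s)$ between smooth envelopes corresponding to the extremal phases of these oscillations.
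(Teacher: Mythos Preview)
Your outline is plausible but takes a genuinely different route from the paper, and the differences matter for how hard the argument becomes.

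The paper does \emph{not} expand $\log\phi_*(s)$ as a function of $\log s$ and then run a saddle-point in the large variable $s$. Instead it writes the inversion integral on the line $\mathcal{R}e\,z=ua^{N}$ with $N=\lfloor\rho(\e)\rfloor$ and a \emph{bounded} parameter $u$, and uses the Poincar\'e equation $\phi(za^{N})=f_{N}(\phi(z))$ together with the Schr\"oder limit $f_{N}(w)=p_1^{N}S(w)(1-R_{N}(w)/S(w))$ to turn the integrand into an explicit $N$-th power of $S(\phi(u-it))$ times controlled factors (Lemma~6.1). The saddle point then lives in the bounded analytic variable $u$, determined by $\nu\psi'(u)=-a^{-\{\rho\}}$ with $\psi=\log S\circ\phi$, and the standard Laplace method (Lemmas~7.2--7.4) applies directly: convexity of $\psi$ gives the Gaussian peak, and the $-\tfrac12\log N$ comes out of the width. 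All periodicity is carried by $\{\rho\}$, so no smoothing or bracketing is needed.

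By contrast, your plan front-loads the difficulty into proving $\log\phi_*(s)=-\sigma\log^2 s+\log s\cdot\Psi_1(\log s)+\Psi_2(\log s)+o(1)$ with periodic $\Psi_i$. This is essentially equivalent to the paper's Lemma~4.2 plus Lemma~6.1 rephrased, so you would end up redoing the Schr\"oder-function analysis in disguise. Two further points deserve care. First, Chernoff plus tilting yields at best $\asymp$, not the $\sim$ claimed in~\eqref{asy0}; to capture the $-\tfrac12\log\omega$ you must run a genuine contour saddle-point, and then the non-smoothness of $\Psi_1,\Psi_2$ (they depend on $\{\log s/\log a\}$) obstructs the Taylor expansion you need---the paper's device of putting the fractional part into the bounded variable $u$ is precisely what sidesteps this. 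Second, your identification of the saddle with the paper's $\omega$ is off by a constant: differentiating only the leading $-\sigma\log^2 s$ gives $\e=2\sigma\log s/s$, hence $\omega-\log\omega=\log(1/\e)+\log(2\sigma)$, whereas the paper's $\omega$ satisfies $\omega-\log\omega=\log(1/\e)-\log\log a$; since $2\sigma=\nu\tau/\log a\neq 1/\log a$ in general, the ``reshuffling'' you invoke does not produce that specific constant. This is harmless for the \emph{form} of~\eqref{asy0} (the discrepancy is absorbed into $M_1,M_2$), but it shows that the paper's $\omega$ arises from the substitution $s=ua^{N}$, $N=\lfloor\rho\rfloor$, $\omega=\rho\log a$, not from the leading-order saddle equation.
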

\smallskip

\begin{remark} 
The leading term of the asymptotics~\eqref{asy0} 
agrees with the logarithmic asymptotics~\eqref{li_ass} since 
$\omega(\e)\sim \log (1/\e)$ according to~\eqref{omega}.
\end{remark}

\begin{remark} It is of course possible to express the asymptotics of $\P\big\{\WW<\e\big\}$ in terms of $\e$ rather than $\omega$ but the formula would be too bulky and less transparent, and would require replacing $\omega$ by its asymptotic decomposition in $\e$ up to the fifth order term.
\end{remark}

Now we turn our attention to the first branching time $\mathcal{K}$. Similarly to the immigration-free case, 
the second largest number of offspring 
\begin{align*}
\lambda=\min\{i>1:p_i>0\}
\end{align*}
will play an important role in describing the behaviour of $\mathcal{K}$. 
It turns out that, conditionally on $\WW<\e$, the typical growth of $\mathcal{K}$ is given by the scaling function $\gamma\equiv \gamma(\e)$ defined 
in a right neighbourhood of zero by 
\begin{align}
\label{gamma}
\gamma(\e)=\frac{\log (1/\e)}{\log a}+\Big[\frac{1}{\log a}+\frac{1}{(\lambda-1)\log p_1}\Big]\log\log (1/\e).
\end{align}

The next theorem identifies $\gamma$ as the right scale   and describes the fluctuations 
of $\mathcal{K}$ around it conditionally on $\mathcal{W}<\e$. 

\begin{theorem} 
\label{main2}
There are constants $c_1,c_2>0$ such that 
\begin{align}
\label{c1}
\limsup_{\e\downarrow 0}\P\big\{\mathcal{K}>\lfloor \gamma(\e)\rfloor +x\, \big|\, \WW<\e\big\}&=\exp\big\{-c_1p_1^{-(\lambda-1)x}\big\}\\
\liminf_{\e\downarrow 0}\P\big\{\mathcal{K}>\lfloor \gamma(\e)\rfloor +x\, \big|\, \WW<\e\big\}&=\exp\big\{-c_2p_1^{-(\lambda-1)x}\big\}
\label{c2}
\end{align}
for all $x\in \Z$. 
\end{theorem}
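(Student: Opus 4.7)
The plan is to use the Markovian structure on $\{\mathcal{K}>k\}$ to reduce the conditional tail problem to a small-ball estimate for sums of i.i.d.\ Schr\"oder martingale limits, feed this into the Laplace/Tauberian machinery that underlies Theorem~\ref{main1}, and sum the resulting contributions to extract the doubly exponential form. On $\{\mathcal{K}>k\}$ the process coincides with the deterministic minimal trajectory $\ZZ_j=\nu(j+1)$ for $0\le j\le k$, so $\P(\mathcal{K}>k)=q_\nu^{k+1}p_1^{\nu k(k+1)/2}$. Restarted at generation $k$, the tree is a Galton--Watson process with immigration issued from the $\nu(k+1)$ ancestors alive at time $k$ together with fresh independent immigration thereafter, so that
\[
a^k\WW\stackrel{d}{=}W_1+\cdots+W_{\nu(k+1)}+\WW/a\quad\text{conditionally on }\{\mathcal{K}>k\},
\]
where the $W_i$ are i.i.d.\ copies of the ordinary Schr\"oder martingale limit (offspring law $(p_k)$, no immigration), independent of a copy of $\WW$. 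Writing $\phi(s)=\E e^{-sW}$ and $\psi(s)=\E e^{-s\WW}$, this yields the Laplace identity
\[
\E\bigl[e^{-s\WW}\1_{\mathcal{K}>k}\bigr]=q_\nu^{k+1}p_1^{\nu k(k+1)/2}\,\phi(s/a^k)^{\nu(k+1)}\,\psi(s/a^{k+1}).
\]

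I would then analyse $F_k(M):=\P(S_{\nu(k+1)}+\WW/a<M)$, with $S_n=W_1+\cdots+W_n$, for $k=\lfloor\gamma(\e)\rfloor+x$ and $M=a^k\e$. Here $M$ grows only polylogarithmically in $\log(1/\e)$ while $n:=\nu(k+1)$ grows linearly, so $M/n\to 0$ and we sit deep in the small-ball regime. Plugging the Schr\"oder-case asymptotic $\phi(s)=s^{-\tau}L(\log s)\bigl(1+s^{-(\lambda-1)\tau}\widetilde L(\log s)+o(1)\bigr)$, with $\tau=\log(1/p_1)/\log a$ and multiplicatively periodic $L,\widetilde L$, into the identity above and inverting by the same exponential Tauberian/saddle-point argument that underlies Theorem~\ref{main1}, one obtains an explicit asymptotic for $F_k(M)$, and hence for $\P(\mathcal{K}>k,\WW<\e)$. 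The saddle sits at $s_*=n\tau/M$, and both $n\log L(\log s_*)$ and the $\psi(s_*/a)$ factor feed into the next-order exponent. Dividing by $\P(\WW<\e)$ from Theorem~\ref{main1} and using the very definition of $\gamma$, the $-\sigma\omega^2$, $\omega\log\omega$, and main $\omega$-order contributions cancel exactly, leaving an $x$-dependent term whose ratio between consecutive $k$'s equals $p_1^{-(\lambda-1)}$ up to a bounded periodic factor: this ratio originates from the $p_\lambda z^\lambda$ term in the offspring p.g.f., which enters $\phi$ as the correction of order $s^{-(\lambda-1)\tau}$, and at the saddle $s_*\asymp a^k$ contributes $p_1^{(\lambda-1)k}$. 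Summing the resulting geometric tail over $k\ge\lfloor\gamma(\e)\rfloor+x+1$ and exponentiating gives the doubly exponential form $\exp(-c(\e)\,p_1^{-(\lambda-1)x})$, in which the prefactor $c(\e)$ inherits the oscillations of $L$.

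The main obstacle is the uniform saddle-point analysis of $F_k(M)$ in the joint scaling $n,M\to\infty$, carrying the multiplicatively periodic corrections with enough precision for the subleading $s^{-(\lambda-1)\tau}$ term to survive in the final exponent. The oscillations of $L$ through its argument $\log s_*$ modulo $\log a$, which depends on $\e$ via the fractional part of $\gamma(\e)$, never die out; consequently $c(\e)$ does not converge but fluctuates between two bounded values, which become the constants $c_1=\liminf_{\e\downarrow 0}c(\e)$ and $c_2=\limsup_{\e\downarrow 0}c(\e)$ in the statement (with $c_1\le c_2$, and equality iff $L$ is constant).
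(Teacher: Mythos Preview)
Your decomposition $\P(\mathcal{K}>k,\mathcal{W}<\e)=q_\nu^{k+1}p_1^{\nu k(k+1)/2}\P(\mathcal{V}_k<\e)$ and the identification of the second-order Schr\"oder correction $s^{-(\lambda-1)\tau}$ in $\phi$ as the source of the factor $p_1^{-(\lambda-1)x}$ are both correct and match the paper's mechanism. But two steps in the outline are broken as written.

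First, there is no ``geometric tail to sum.'' At the saddle, $\phi(s_*)^n\approx(\text{main})\cdot\exp\bigl(n\,s_*^{-(\lambda-1)\tau}\widetilde L\bigr)$, and the exponent $n\,s_*^{-(\lambda-1)\tau}$ is already $\asymp p_1^{-(\lambda-1)x}$ by the very definition of $\gamma$; the double exponential therefore appears \emph{directly} in the conditional probability for the single value $k=\lfloor\gamma\rfloor+x$. No summation over larger $k$ is needed or meaningful --- a sum over $k\ge\lfloor\gamma\rfloor+x+1$ with ratio $p_1^{-(\lambda-1)}>1$ diverges. Relatedly, your claim ``$s_*\asymp a^k$'' contradicts your own $s_*=n\tau/M$: the latter gives $s_*\asymp a^{\rho-k}$ in the rescaled variable (a power of $\log(1/\e)$, not of $1/\e$), and it is precisely this scale that makes $n\,s_*^{-(\lambda-1)\tau}$ of order one.

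Second, and more seriously for a rigorous proof, handling the numerator with a $k$-dependent saddle and the denominator via Theorem~\ref{main1} forces you to match two $\exp\{-\sigma\omega^2+\omega M_1(\omega)+\cdots\}$ expansions obtained by different routes to $O(1)$ accuracy in the exponent. The paper sidesteps this entirely by placing the inversion contour at the \emph{same} point $ua^N$, $N=\lfloor\rho\rfloor$, for both $k=-1$ and $k=\lfloor\gamma\rfloor+x$ (Proposition~\ref{prop}). The algebraic key is that $q_\nu^{k+1}p_1^{\nu k(k+1)/2}\cdot C_{k,N}=q_\nu^N p_1^{\nu N(N+1)/2}$ is \emph{independent of $k$}, so the ratio collapses immediately to $\Psi_{k,N}(\phi(u))/\Psi_{-1,N}(\phi(u))$ with $\Psi_{k,N}(z)=\bigl(1-R_{N-k}(z)/S(z)\bigr)^{\nu N}$. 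Its logarithm is $\sim -c\,\nu N\,S(\phi(u))\,p_1^{(\lambda-1)(N-k)}$, and the choice of $\gamma$ makes $N\,p_1^{(\lambda-1)(N-\gamma)}\asymp 1$, giving the stated form directly with the oscillating prefactor coming from $\{\rho\}$ and $\{\gamma\}$. No separate cancellation argument is required.
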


\begin{remark} Comparing $\gamma$  with $\gs$ in the Schr\"oder case 
it is easy to see that the immigration can both force the process to start branching earlier
and later. The former situation occurs if $ap_1^{\lambda-1}>1$
and the latter if $ap_1^{\lambda-1}<1$.  This is not intuitively clear at the first glance as we would expect 
an earlier branching for small values of $p_1$. However, the catch is that an early branching would require suppressing a larger number 
of the subtrees to ensure $\WW<\e$, which is hard if $p_1$ is small. 
\end{remark}

\begin{remark} Theorem~\ref{main2} 
describes the fluctuations for every finite $x\in \Z$ rather than just identifying the tail behaviour as $x\to\pm\infty$. 
It immediately implies that the fluctuations are of finite order 
with a \emph{double-exponential} right tail. 
This puts the Galton--Watson process with immigration strictly between the B\"ottcher case for the standard Galton--Watson process,
where there are no fluctuations at all, and the Schr\" oder case, where the right tail decays exponentially by Theorem~\ref{main0}. 
At the same time, the left tail still has an \emph{exponential} decay since 
\begin{align*}
1-\exp\big\{-c_ip_1^{-(\lambda-1)x}\big\}\asymp p_1^{-(\lambda-1)x}
\end{align*}
as $x\to -\infty$ for $i=1,2$. Moreover, according to Theorem~\ref{main0}, the exponent of decay of the left tail is exactly the same as in the Schr\" oder case, which shows that, unlike the right tail, the left tail is not affected by the immigration.  
\end{remark}

The comparison between the fluctuations in all three cases is given on the picture below.
\begin{figure}[h]
\input{fluc.pstex_t}
\end{figure}

\subsection{Ideas of the proofs}
\medskip

It is well-known that the left tail of a positive random variable at zero is closely related to its Laplace transform  
at infinity. Following the approach suggested in~\cite{FW09}, we use the precise inversion formula to obtain 
$\P\{\WW<\e\}$ from the Laplace transform $\phi_*(z)=\E e^{z\WW}$. 
This is a technically challenging  step but, unlike the standard large deviations techniques, it is capable of 
providing the full asymptotics of $\P\{\WW<\e\}$ rather than the logarithmic one. 
Then we use the immigration mechanism to relate $\phi_*(z)$ with the Laplace transform 
$\phi(z)=\E e^{zW}$ of the immigration-free Galton--Watson process. 
Further, we rely on the well-known Poincar\' e functional equation 
\begin{align}
\label{pfe}
\phi(za^n)=f_n(\phi(z)),
\end{align}
where $f_n$ is the $n$-th iterate of the generating function of the offspring random variable $X$, in order
to understand the behaviour of $\phi(z)$  for large $z$ through the asymptotics of $f_n$ as $n$ tends to infinity. 
\medskip

The paper is organised as follows. In Section~\ref{s_not} we introduce notation and prove a couple of technical results for standard Galton--Watson processes. In Section~\ref{s_flu} we establish the error term of the asymptotics~\eqref{s} and use it to prove Theorem~\ref{main0}. In Section~\ref{s_imm} we introduce notation relevant to immigration and get some preliminary asymptotic results for Galton--Watson processes with immigration. In Section~\ref{s_fin} we prove two technical lemmas describing the behaviour of the Schr\"oder function. In Section~\ref{s_main} we reduce the problem of describing the behaviour of 
$\mathcal{W}$ and $\mathcal{K}$ to that of understanding the left tails of a certain sequence of random variables $(\mathcal{V}_n)$, and establish some asymptotic properties of their Laplace transforms. In Section~\ref{s_lef} we study the left tails of $(\mathcal{V}_n)$
and, finally, in Section~\ref{s_pro} we combine everything and prove Theorems~\ref{main1} and~\ref{main2}.          



\bigskip


\section{Notation and preliminaries in immigration-free case}
\label{s_not}
\medskip

For any $r>0$, let $\mathcal{D}_r=\{z\in \C:|z|\le r\}$ be the closed disc of radius $r$. 
Denote by
\begin{align*}
f(z)=\sum_{n=1}^{\infty}p_nz^n,\qquad z\in \mathcal{D}_1,
\end{align*}
the generating function of the offspring random variable $X$. 
Denote by
\begin{align*}
\phi(z)=\E e^{-zW}, \qquad z\in\C, \mathcal{R}e\, z\ge 0, 
\end{align*}
the Laplace transform of the martingale limit $W$ of the corresponding Galton--Watson tree. 
\medskip

Let $f_0(z)=z$ and, for each $n\ge 1$, 
denote $f_n(z)=f(f_{n-1}(z))$. 
%
Since the behaviour of the iterates of  $f(z)$ for large $n$ is mainly determined by its leading term $p_1 z$, 
it is convenient to use the decomposition 
\begin{align}
\label{fff}
f_n(z)=f(f_{n-1}(z))
&=p_1 f_{n-1}(z)\Big(1+p_1^{-1}\sum_{l>1}p_lf_{n-1}^{l-1}(z)\Big)
=p_1^n z\prod_{j=0}^{n-1}A_j(z),
\end{align}
where the functions $A_j$ are defined on $\mathcal{D}_1$ by 
\begin{align}
\label{defa}
A_j(z)=1+p_1^{-1}\sum_{l>1}p_lf_j^{l-1}(z).
\end{align}
For each $|z|<1$, denote
\begin{align}
\label{infps}
S(z)=z\prod_{j=0}^{\infty}A_j(z).
\end{align}
It is well-known (see~\cite[Lemma 3.7.2 and Corollary 3.7.3]{AH}) that this infinite product converges uniformly on each $\mathcal{D}_r$, $r\in (0,1)$, 
and the function $S$ is called the Schr\" oder function. It is easy to see that 
\begin{align}
\label{defS}
S(z)=\lim_{n\to\infty} \frac{f_n(z)}{p_1^n}. 
\end{align}
In particular, on each $\mathcal{D}_r$, $r\in (0,1)$, the Schr\" oder function $S$ is bounded and 
$A_j(z)\to 1$ uniformly as $j\to \infty$.
Denote 
\begin{align*}
\mathcal{B}=\{z\in \C: |z|<1, S(z)\neq 0\}.
\end{align*}
For each $n\ge 0$ and $|z|<1$, denote 
\begin{align}
\label{defR}
R_n(z)=S(z)-p_1^{-n}f_n(z). 
\end{align}

The asymptotic behaviour of $R_n$ will be crucial for our analysis. 
In the remaining part of this section we prove some elementary properties of the functions $S$, $R_n$, and $A_n$.

\begin{lemma} 
\label{nonzero}

(a) $|S(z)|\le S(|z|)$ for any $|z|<1$. 

(b) The functions $s\mapsto S(s)$ and $s\mapsto S(s)/s$ are increasing on $[0,1)$. 

%

\end{lemma}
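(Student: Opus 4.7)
The plan is to exploit the fact that $f(z) = \sum_{n \ge 1} p_n z^n$ is a power series with non-negative coefficients, from which the same property propagates to every iterate $f_n$, and hence to the factors $A_j$ and ultimately to $S$.

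For part (a), I would first observe by induction on $j$ that $|f_j(z)| \le f_j(|z|)$ for every $|z| < 1$. The base case $j=0$ is trivial, and the inductive step follows from the triangle inequality applied to the power series for $f$:
\begin{align*}
|f_j(z)| = \Big|\sum_{n\ge 1} p_n f_{j-1}^n(z)\Big| \le \sum_{n\ge 1} p_n |f_{j-1}(z)|^n \le \sum_{n\ge 1} p_n f_{j-1}(|z|)^n = f_j(|z|),
\end{align*}
where the last inequality uses the induction hypothesis together with monotonicity of $f$ on $[0,1)$. Plugging this into the definition~\eqref{defa} of $A_j$ yields $|A_j(z)| \le A_j(|z|)$. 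Then from~\eqref{infps} we get $|S(z)| \le |z|\prod_{j=0}^{\infty} |A_j(z)| \le |z|\prod_{j=0}^{\infty} A_j(|z|) = S(|z|)$, using that the infinite product converges absolutely and uniformly on compact subsets of the open unit disc by~\cite[Corollary 3.7.3]{AH}.

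For part (b), the key observation is that on $[0,1)$ each $f_j$ takes non-negative values and is (strictly) increasing, since it has non-negative Taylor coefficients and is non-trivial. Consequently each $A_j(s) = 1 + p_1^{-1}\sum_{l>1}p_l f_j(s)^{l-1}$ is a non-negative increasing function of $s \in [0,1)$, in fact bounded below by $1$. Therefore each finite partial product $s \prod_{j=0}^N A_j(s)$ is a product of non-negative increasing functions on $[0,1)$, hence itself non-negative and increasing. Passing to the limit as $N \to \infty$ (using the uniform convergence on compact subsets) preserves monotonicity, so $S$ is increasing on $[0,1)$; the same argument applied to $S(s)/s = \prod_{j=0}^{\infty} A_j(s)$ gives monotonicity of this ratio.

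Neither part presents a serious obstacle: both rely only on the positivity of the offspring distribution and the known uniform convergence of the product defining $S$. The one point that requires a modicum of care is ensuring that the limit operation preserves monotonicity and the modulus inequality, which is why I would state the bounds first at the level of the finite partial products $s\prod_{j=0}^{N}A_j(s)$ and only then pass to the limit.
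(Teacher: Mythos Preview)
Your proof is correct and follows essentially the same approach as the paper's: reduce to $|A_j(z)|\le A_j(|z|)$ (which follows from $|f_j(z)|\le f_j(|z|)$), and for monotonicity use that each $A_j$ is increasing on $[0,1)$ because each $f_j$ is. The paper's proof is simply a terser version of what you wrote, the only difference being that it notes the monotonicity of $S(s)/s$ alone suffices for part~(b).
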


\begin{proof} 
(a) This follows from the same property for each $A_j$ which, in turn, follows from the same property for each $f_j$.
\medskip

(b) Obviously, it suffices to prove the second statement only. Observe that $S(s)/s$ is a product of $A_j(s)$, 
and each $A_j$ is increasing since each $f_j$ is increasing.  
\end{proof}

\begin{lemma} 
\label{second}
(a) Let $r\in (0,1)$. Then there are constants $c_1, c_2>0$ such that  
\begin{align*}
A_n(z)-1\sim c_1 S(z)p_1^{n(\lambda-1)}
\end{align*}
and
\begin{align*}
\phantom{aaaa} R_n(z)\sim c_2 S^2(z)p_1^{n(\lambda-1)}
\end{align*}
as $n\to\infty$ uniformly on $\mathcal{D}_r\cap \mathcal{B}$. 
\medskip

(b) $R_n(z)=0$ if $z\in \mathcal{D}_r\backslash\mathcal{B}$ for all sufficiently large $n$. 
\end{lemma}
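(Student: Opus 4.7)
My plan is to exploit the two structural identities already in the excerpt: the product expansion $f_n(z) = p_1^n z \prod_{j=0}^{n-1} A_j(z)$ from~\eqref{fff} and the definition $S(z) = z\prod_{j=0}^\infty A_j(z)$ from~\eqref{infps}. From these and the uniform convergence of the product on every $\mathcal{D}_r$ with $r<1$, I get for free that $p_1^{-n}f_n(z)\to S(z)$ uniformly on $\mathcal{D}_r$, so $f_n(z) = p_1^n S(z)(1+o(1))$ on $\mathcal{D}_r\cap\mathcal{B}$ (where $S$ is bounded away from $0$ on compact sub-neighbourhoods). All constants below depend only on $r$.

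For part (a), first asymptotic, I would substitute the expansion of $f_n$ directly into the definition~\eqref{defa}. Since $p_l=0$ for $1<l<\lambda$, the Taylor series starts at the $\lambda$-th term:
\[
A_n(z)-1=p_1^{-1}p_\lambda f_n(z)^{\lambda-1}\Bigl[1+p_\lambda^{-1}\sum_{l>\lambda}p_l f_n(z)^{l-\lambda}\Bigr].
\]
The bracketed factor tends to $1$ uniformly on $\mathcal{D}_r$ because $|f_n(z)|\le p_1^n\cdot r\cdot\prod_j\|A_j\|_\infty = O(p_1^n)\to 0$. Combined with $f_n(z)^{\lambda-1}\sim p_1^{n(\lambda-1)}S(z)^{\lambda-1}$ on $\mathcal{D}_r\cap\mathcal{B}$, this yields the first asymptotic (with the power of $S$ being $\lambda-1$ and $c_1=p_\lambda/p_1$).

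For the $R_n$ asymptotic, I rewrite
\[
R_n(z)=z\prod_{j=0}^\infty A_j(z)-z\prod_{j=0}^{n-1} A_j(z) = p_1^{-n}f_n(z)\Bigl[\prod_{j=n}^\infty A_j(z)-1\Bigr].
\]
Each factor in the tail product satisfies $A_j(z)-1=O(p_1^{j(\lambda-1)})$ uniformly on $\mathcal{D}_r$ by the first part, so $\log\prod_{j\ge n}A_j(z) = \sum_{j\ge n}(A_j(z)-1)+O(p_1^{2n(\lambda-1)})$ and hence $\prod_{j\ge n}A_j(z)-1 \sim \sum_{j\ge n}(A_j(z)-1)$. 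Summing the resulting geometric series in $p_1^{\lambda-1}$ and multiplying by $p_1^{-n}f_n(z)\to S(z)$ delivers the second asymptotic, with the power of $S$ being $\lambda$ and $c_2=p_\lambda\bigl/\bigl[p_1(1-p_1^{\lambda-1})\bigr]$.

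For part (b), I note that $S$ is analytic on $\mathcal{D}_1$ with $S(z)=z+O(z^2)$, so $S\not\equiv 0$ and the zero set of $S$ inside the compact disc $\mathcal{D}_r$ is finite: say $\{0, z_1,\dots, z_N\}$. For each $z_i\neq 0$ with $S(z_i)=0$, the product formula $S(z_i)=z_i\prod A_j(z_i)$ forces $A_{j_i}(z_i)=0$ for some $j_i$. Then $f_{j_i+1}(z_i)=p_1 f_{j_i}(z_i)A_{j_i}(z_i)=0$, and since $f(0)=p_0=0$ the vanishing propagates: $f_n(z_i)=0$ for every $n\ge j_i+1$. For $z=0$ we have $f_n(0)=0$ trivially for all $n$. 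Taking $n>\max_i(j_i+1)$ makes $R_n(z)=S(z)-p_1^{-n}f_n(z)=0$ simultaneously at every point of the finite set $\mathcal{D}_r\setminus\mathcal{B}$.

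The main obstacle I anticipate is making the $\sim$ statements in part (a) truly uniform over $\mathcal{D}_r\cap\mathcal{B}$: the factor $S(z)^{\lambda-1}$ in the denominator of the relative error can be arbitrarily small near the zeros of $S$, so the $o(1)$ errors need to be controlled \emph{multiplicatively} rather than additively. I would handle this by carrying the error terms in the form $p_1^{-n}f_n(z)=S(z)+R_n(z)$ with explicit absolute bounds on $R_n$ (which the statement itself eventually provides, giving a bootstrap), and normalising by $S(z)$ only once $z$ is pinned inside $\mathcal{B}$, where $S(z)\neq 0$ by definition.
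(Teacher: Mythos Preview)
Your approach is essentially identical to the paper's: expand $A_n-1$ via~\eqref{defa} to isolate the leading $f_n^{\lambda-1}$ term, rewrite $R_n = p_1^{-n}f_n\bigl(\prod_{j\ge n}A_j-1\bigr)$ and sum the resulting geometric tail, and for (b) trace each zero of $S$ back to a zero of some $A_j$ and hence of $f_n$ for all large $n$. Your uniformity worry dissolves once you observe that $p_1^{-n}f_n(z)/S(z) = \bigl(\prod_{j\ge n}A_j(z)\bigr)^{-1}$, and the tail of the uniformly convergent product~\eqref{infps} tends to $1$ uniformly on \emph{all} of $\mathcal{D}_r$ (so the needed control is already multiplicative, no bootstrap required); your exponents $S^{\lambda-1}$ and $S^{\lambda}$ are what the computation actually yields---the paper's stated exponents appear to be typos, harmless for the later applications where only boundedness and non-vanishing of the $S$-factor matter.
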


\begin{proof} 
(a)  Every convergence and equivalence mentioned in the proof below is meant to be uniform on $\mathcal{D}_r\cap \mathcal{B}$.
\medskip

Using~\eqref{fff} and~\eqref{infps} we obtain 
\begin{align*}
R_n(z)=p_1^{-n}f_n(z)\Big(\prod_{k=n}^{\infty}A_k(z)-1\Big).
\end{align*}
Since $A_j(z)\to 1$ as $j\to\infty$ and using~\eqref{defS} we have 
\begin{align}
\label{loc7}
R_n(z)\sim S(z)\Big[\exp\Big\{\sum_{j=n}^{\infty}\log A_j(z)\Big\}-1\Big].
\end{align} 
Observe that $A_j(z)\neq 1$ eventually on $\mathcal{B}$. Indeed, it follows from~\eqref{defS} that  $f_n(z)\to 0$ 
as $n\to\infty$. 
Hence the first term of the sum in~\eqref{defa} dominates over the remaining terms
\begin{align}
\label{negl}
\Big|\sum_{l>\lambda}p_l f_j^{l-1}(z)\Big|\le |f_j^{\lambda}(z)|=o(|f_j^{\lambda-1}(z)|).
\end{align} 
It remains to notice that $f_n(z)\neq 0$ eventually by~\eqref{defS} and use~\eqref{defa} together with~\eqref{negl}.  
\medskip

Now the first statement follows from 
\begin{align*}
\log A_j(z)\sim A_j(z)-1\sim \frac{p_{\lambda}}{p_1}f_j^{\lambda-1}(z)\sim p_{\lambda}p_1^{j(\lambda-1)-1}S(z)
\end{align*}
with $c_1=p_{\lambda}/p_1$, where the middle equivalence is implied by~\eqref{negl} and 
the last one by~\eqref{defS}. Now we have 
\begin{align*}
\sum_{j=n}^{\infty}\log A_j(z)\sim \sum_{j=n}^{\infty}p_{\lambda}p_1^{j(\lambda-1)-1}S(z)\sim c_2 S(z)p_1^{n(\lambda-1)}
\end{align*}
with some $c_2>0$. 
Substituting this into~\eqref{loc7} we obtains the required asymptotics. 
\medskip

(b) It is easy to see from~\eqref{infps} that $S(z)=0$ implies $z=0$ or $A_j(z)=0$ for some $j$. Then $f_n(z)=0$
eventually by~\eqref{fff} and so $R_n(z)=0$ as well. 

\end{proof}

%

\bigskip

\section{Fluctuations in the Schr\" oder case}
\label{s_flu}
\medskip

In this section we prove Theorem~\ref{main0}. The proof for the right tail of $K-\gs$ will be rather straightforward.
The left tail, however, is controlled by the second term of the asymptotics~\eqref{s}, which we estimate in the lemma below.

\begin{lemma} 
\label{secsec}
In the Schr\" oder case, 
\begin{align*}
L(\e)\e^{\tau}-\P\{W<\e\}\asymp\e^{\tau \lambda},
\end{align*}
as 
$\e\downarrow 0$, where $\tau=-\frac{\log p_1}{\log a}$ and $L$ is an analytic multiplicatively periodic function on $(0,\infty)$ with period~$a$. 
\end{lemma}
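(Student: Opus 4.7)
The plan is to refine the existing asymptotic $\P\{W<\e\}\sim L(\e)\e^{\tau}$ by extracting the next-order term in the Laplace-transform expansion of $\phi(z)=\E e^{-zW}$ at infinity. The key input is Lemma~\ref{second}(a): rewriting its defining identity as $f_n(z)=p_1^n[S(z)-R_n(z)]$ with $R_n(z)\sim c_2 S^2(z)p_1^{n(\lambda-1)}$ uniformly on compact subsets of $\mathcal{B}$ gives an expansion one order beyond the one that produces the leading term.

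First I would use the Poincar\'e equation $\phi(sa^n)=f_n(\phi(s))$ to write, for every large $s>0$,
\[
\phi(s)=p_1^{n(s)}S\bigl(\phi(s/a^{n(s)})\bigr)-p_1^{n(s)}R_{n(s)}\bigl(\phi(s/a^{n(s)})\bigr),
\]
where $n(s)$ is chosen so that $s/a^{n(s)}\in[1,a)$; then $\phi(s/a^{n(s)})$ ranges in a fixed compact subset of $(0,1)\subset\mathcal{B}$, so Lemma~\ref{second}(a) applies and gives
\[
\phi(s)=p_1^{n(s)}\Bigl[S\bigl(\phi(s/a^{n(s)})\bigr)-c_2 S^2\bigl(\phi(s/a^{n(s)})\bigr)p_1^{n(s)(\lambda-1)}+o\bigl(p_1^{n(s)(\lambda-1)}\bigr)\Bigr].
\]
Since $p_1^{n(s)}$ equals $s^{-\tau}$ up to a smooth multiplicatively periodic correction in $\log s$, the bracketed leading part is of order $s^{-\tau}$ while the subtracted piece is of order $s^{-\tau\lambda}$, each modulated by a strictly positive, analytic, multiplicatively periodic function of $\log s$ with period $\log a$.

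Next I would invoke the precise Laplace-inversion machinery developed in~\cite{FW09} (originally for the B\"ottcher regime, but applicable with minor modifications to the power-law decay here). Inversion of the leading term $p_1^{n(s)}S(\phi(s/a^{n(s)}))$ reproduces the known asymptotic $L(\e)\e^{\tau}$; inversion of the correction $c_2 p_1^{n(s)\lambda} S^2(\phi(s/a^{n(s)}))$ by the same contour-integration argument yields a contribution $L_{\lambda}(\e)\e^{\tau\lambda}$ with $L_{\lambda}$ strictly positive, analytic, and multiplicatively periodic with period~$a$, while the $o$-term inverts to $o(\e^{\tau\lambda})$. Subtracting gives
\[
L(\e)\e^{\tau}-\P\{W<\e\}=L_{\lambda}(\e)\e^{\tau\lambda}+o(\e^{\tau\lambda}),
\]
and boundedness of $L_{\lambda}$ away from zero and infinity (by positivity, continuity, and periodicity) yields the claimed $\asymp$.

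The main obstacle is propagating the real-line expansion of $\phi$ onto a complex contour suitable for the precise Laplace inversion, and controlling the remainder uniformly on that contour. Concretely, one needs Lemma~\ref{second}(a) uniformly on a complex neighbourhood of the interval selected for $s/a^{n(s)}$ (its proof already extends there, since $f_n(z)\to 0$ uniformly on any $\mathcal{D}_r\cap\mathcal{B}$), and one must choose the contour so that $\phi(s/a^{n(s)})$ stays away from zeros of $S$ along the entire path of integration, ensuring that both the structural term $S^2\circ\phi$ and the error $o(p_1^{n(s)\lambda})$ remain within the regime of the inversion lemma. A secondary subtlety is verifying the positive sign of the correction --- which ultimately comes from $c_2,S>0$ on $(0,1)$ --- survives the contour deformation; this is the step that distinguishes the genuine $\asymp$ from a merely $O$-estimate.
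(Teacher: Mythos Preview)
Your overall plan --- Poincar\'e equation plus the decomposition $f_n=p_1^n(S-R_n)$ together with Lemma~\ref{second}(a), followed by contour inversion --- is exactly the route the paper takes, and your identification of the correction as $\e^{\tau\lambda}$ times an oscillatory integral of $S^2\circ\phi$ is correct. The paper carries out the inversion concretely: shift the contour in the inversion formula to the vertical line through $a^{\lfloor\gamma_{\rm s}\rfloor}$, substitute, and split according to $S-R_{\lfloor\gamma_{\rm s}\rfloor}$, arriving at
\[
L(\e)\e^{\tau}-\P\{W<\e\}\asymp \e^{\tau\lambda}\int_{-\infty}^{\infty}\frac{e^{a^{-\{\gamma_{\rm s}\}}(1-it)}-1}{1-it}\,S^2(\phi(1-it))\,dt.
\]
So far this matches your outline.

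The genuine gap is in what you call the ``secondary subtlety'': showing that the resulting coefficient $L_\lambda$ (equivalently, the integral above) is bounded away from zero. You propose to deduce this from ``$c_2,S>0$ on $(0,1)$'', but that only controls the integrand on the real axis; along the vertical contour $S^2(\phi(1-it))$ is complex-valued and the integral is oscillatory, so positivity of the real-axis ingredients says nothing about the sign or nonvanishing of the integral. This is not a minor technicality --- it is precisely the lower bound in the claimed $\asymp$, and without it you only get $L(\e)\e^\tau-\P\{W<\e\}=O(\e^{\tau\lambda})$.

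The paper resolves this by a probabilistic reinterpretation rather than by analysis of the integrand. Take two independent copies $W_1,W_2$ of $W$; their sum has Laplace transform $\phi^2$, so running the \emph{same} inversion and decomposition produces exactly the $S^2$-integral as its leading coefficient:
\[
\P\{W_1+W_2<\e\}\asymp \e^{2\tau}\int_{-\infty}^{\infty}\frac{e^{a^{-\{\gamma_{\rm s}\}}(1-it)}-1}{1-it}\,S^2(\phi(1-it))\,dt,
\]
the cross and squared $R$-terms being negligible by Lemma~\ref{second} again. But $\P\{W_1+W_2<\e\}$ is sandwiched between $\P\{W_1<\e/2\}\P\{W_2<\e/2\}$ and $\P\{W_1<\e\}\P\{W_2<\e\}$, both of which are $\asymp\e^{2\tau}$ by the known first-order asymptotic. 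Hence the integral is bounded away from zero and infinity, and the $\asymp$ follows. This trick --- matching the analytically awkward $S^2$-integral to a probability whose size is trivially controlled --- is the missing idea in your proposal.
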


\begin{proof}

Since $W$ is almost surely positive and has no atoms,  its left tail can be computed by the inversion formula 
\begin{align}
\label{vv}
\P\big\{W<\e\big\}=\frac{1}{2\pi}\int_{-\infty}^{\infty}\frac{1-e^{-i\tau \e }}{i\tau}\,
\phi(-i\tau)\, d\tau. 
\end{align}
Recall the definition~\eqref{gammas} of $\gs$. Changing the integration contour from the vertical axis to the vertical line 
passing through $a^{\lfloor\gs\rfloor}$ and substituting $\tau=ta^{\lfloor\gs\rfloor}$ we obtain 
\begin{align}
\label{loc8}
\P\big\{W<\e\big\}
&=\frac{1}{2\pi}\int_{-\infty}^{\infty}\frac{e^{\e a^{\lfloor\gs\rfloor}(1-i t)}-1}{1-i t}\,
\phi\big((1-i t)a^{\lfloor\gs\rfloor}\big)\, d t.
\end{align}
By the definition of $\gs$ we have $\e a^{\lfloor\gs\rfloor}=a^{-\{\gs\}}$. Further, the Poincar\' e functional equation~\eqref{pfe}
and~\eqref{defR} imply
\begin{align}
\label{loc9}
\phi\big((1-i t)a^{\lfloor\gs\rfloor}\big)=f_{\lfloor \gs\rfloor}\big(\phi(1-it)\big)
=p_1^{\lfloor \gs\rfloor}S(\phi(1-it))-p_1^{\lfloor \gs\rfloor}R_{\lfloor \gs\rfloor}(\phi(1-it)).
\end{align}
Substituting this into~\eqref{loc8} and taking into account that $p_1^{\lfloor \gs\rfloor}=\e^{\tau}p_1^{-\{\gs\}}$
by the definition of~$\tau$, we obtain 
\begin{align}
\label{loc99}
\P\big\{W<\e\big\}
&=\e^{\tau}\, \frac{p_1^{-\{\gs\}}}{2\pi }\int_{-\infty}^{\infty}\frac{e^{a^{-\{\gs\}}(1-i t)}-1}{1-i t}\,
\big[S(\phi(1-i t))-R_{\lfloor\gs\rfloor}(\phi(1-it))\big]\, d t. 
\end{align}
This representation naturally splits the the left tail probability into the leading term corresponding to $S$
and the error  term corresponding to $R_{\lfloor\gs\rfloor}$.  Namely, we define 
\begin{align}
\label{defL}
L(\e)=\frac{p_1^{-\{\gs\}}}{2\pi}\int_{-\infty}^{\infty}\frac{e^{a^{-\{\gs\}}(1-i t)}-1}{1-i t}\,S(\phi(1-it))\, d t.
\end{align}
In order to prove that the integral is finite we first observe that the ratio under the integral is bounded. Then we get by Lemma~\ref{nonzero} 
\begin{align}
\label{mono}
|S(\phi(1-it))|\le S(|\phi(1-it)|)\le \frac{S(\phi(1))}{\phi(1)}|\phi(1-it)|
\end{align}
and use~\cite[Lemma 16]{FW09} which claims that 
$|\phi(1-it)|$ is integrable over $\R$ with respect to $t$. 
Hence the function $L$ is well-defined, bounded, and multiplicatively periodic with period~$a$ since $\gs(\e)-\gs(a\e)=1\in\Z$.
In particular, once we have shown that the error term is negligible, it will imply that 
the function $L$ must be the same as in~\eqref{s}
and hence real-valued and is analytic. 
\medskip

For the error term, we use Lemma~\ref{second} with $r=\phi(1)$ to get 
\begin{align*}
\e^{\tau}\, \frac{p_1^{-\{\gs\}}}{2\pi }\int_{-\infty}^{\infty}\frac{e^{a^{-\{\gs\}}(1-i t)}-1}{1-i t}\,
R_{\lfloor\gs\rfloor}(\phi(1-it))\, d t
\asymp \e^{\tau\lambda}\int_{-\infty}^{\infty}\frac{e^{a^{-\{\gs\}}(1-i t)}-1}{1-i t}\,
S^2(\phi(1-it))\, d t, 
\end{align*}
and it remains to show that the integral on the right-hand side is bounded away from zero and infinity. 
\medskip

To do so, consider two independent random variables $W_1$ and $W_2$ with the same distribution as $W$. 
Observe that 
\begin{align}
\label{asbounds}
\e^{2\tau}\asymp  \P\big\{W_1<\e/2, W_2<\e/2\big\}
\le \P\big\{W_1+W_2<\e\big\}
\le \P\big\{W_1<\e, W_2<\e\big\}
\asymp \e^{2\tau}.
\end{align}
Similarly to~\eqref{vv}, \eqref{loc8}, \eqref{loc9}, and~\eqref{loc99} we have 
\begin{align}
\P\big\{W_1+W_2<\e\big\}
&=\frac{1}{2\pi}\int_{-\infty}^{\infty}\frac{1-e^{-i\tau \e }}{i\tau}\,
\phi^2(-i\tau)\, d\tau\notag\\
&\asymp \e^{2\tau}\int_{-\infty}^{\infty}\frac{e^{a^{-\{\gs\}}(1-i t)}-1}{1-i t}\,
\big[S(\phi(1-i t))-R_{\lfloor\gs\rfloor}(\phi(1-it))\big]^2\, d t.
\label{asy}
\end{align}
By Lemma~\ref{second} and boundedness of $S$ we have 
\begin{align*}
&\Big|\int_{-\infty}^{\infty}\frac{e^{a^{-\{\gs\}}(1-i t)}-1}{1-i t}\,
S(\phi(1-it))R_{\lfloor\gs\rfloor}(\phi(1-it))\, d t\Big|
=O(\e^{\tau(\lambda-1)})\int_{-\infty}^{\infty}|S^3(\phi(1-it))|\, d t,\\
&\Big|\int_{-\infty}^{\infty}\frac{e^{a^{-\{\gs\}}(1-i t)}-1}{1-i t}\,
R^2_{\lfloor\gs\rfloor}(\phi(1-it))\, d t\Big|
=O(\e^{2\tau(\lambda-1)})\int_{-\infty}^{\infty}|S^4(\phi(1-it))|\, d t.
\end{align*}
Since $S$ is bounded, both integrals on the right-hand side are finite by the same argument 
as above in~\eqref{mono} combined with~\cite[Lemma 16]{FW09}. This implies that both terms on the left-hand side 
are $o(1)$ and hence the main term of the asymptotics~\eqref{asy} is given by 
\begin{align*}
\P\big\{W_1+W_2<\e\big\}
\asymp \e^{2\tau}\int_{-\infty}^{\infty}\frac{e^{a^{-\{\gs\}}(1-i t)}-1}{1-i t}\,
S^2(\phi(1-it))\, d t.
\end{align*}
Now~\eqref{asbounds} implies that the above integral is bounded away from zero and infinity. 
\end{proof}
\medskip

\begin{proof}[Proof of Theorem~\ref{main0}] It suffuses to prove the theorem for $x\in \Z$. 
Observe that the condition $K>\gs+x$ is equivalent to having just one offspring in the generation $\lfloor \gs\rfloor+x$. 
On this event, the condition $W<\e$ is equivalent to $\hat W<a^{\lfloor\gs\rfloor+x}\e$, where $\hat W$ is the martingale limit 
of the Galton--Watson subtree generated by that offspring. Hence $\hat W$ has the same distribution as $W$ but is 
also independent of the event $Z_{\lfloor\gs\rfloor+x}=1$. Using~\eqref{gammas} we obtain 
\begin{align*}
\P\big\{K>\gs+x, W<\e\big\}=\P\big\{Z_{\lfloor\gs\rfloor+x}=1, \hat W<a^{\lfloor\gs\rfloor+x}\e\big\}
=p_1^{\lfloor\gs\rfloor+x}\P\big\{\hat W<a^{-\{\gs\}+x}\big\}.
\end{align*}
Combining this with the left tail asymptotics~\eqref{s} and using $p_1^{\gs}=\e^{\tau}$ we get 
\begin{align*}
\P\big\{K>\gs+x\,\big|\, W<\e\big\}
&\sim \frac{p_1^{\lfloor\gs\rfloor+x}}{L(\e)\e^{\tau}}\P\big\{\hat W<a^{-\{\gs\}+x}\big\}
= \frac{p_1^{-\{\gs\}+x}}{L(a^{-\{\gs\}})}\P\big\{\hat W<a^{-\{\gs\}+x}\big\}
\end{align*}
as $\e\downarrow 0$ since $L$ is multiplicatively periodic with period $a$ and $\e=a^{-\gs}$.  
Observe that, for a fixed $x$, the expression on the right-hand side only depends on $\{\gs\}$
and so is multiplicatively periodic in $\e$. This implies that 
\begin{align*}
\liminf_{\e\downarrow 0}\P\big\{K>\gs+x\,\big|\, W<\e\big\}
&=p_1^{x}\, \frac{p_1^{-\alpha_1}}{L(a^{-\alpha_1})}\P\big\{\hat W<a^{-\alpha_1+x}\big\},\\
\limsup_{\e\downarrow 0}\P\big\{K>\gs+x\,\big|\, W<\e\big\}
&=p_1^{x}\, \frac{p_1^{-\alpha_2}}{L(a^{-\alpha_2})}\P\big\{\hat W<a^{-\alpha_2+x}\big\},
\end{align*}
for some $\alpha_1, \alpha_2\in [0,1]$. It suffices now to show that uniformly for all $\alpha\in [0,1]$
\begin{align*}
p_1^{x}\, \frac{p_1^{-\alpha}}{L(a^{-\alpha})}\P\big\{\hat W<a^{-\alpha+x}\big\}\asymp p_1^x 
\end{align*}
and 
\begin{align*}
1-p_1^{-x}\, \frac{p_1^{-\alpha}}{L(a^{-\alpha})}\P\big\{\hat W<a^{-\alpha-x}\big\}\asymp p_1^{(\lambda-1)x} 
\end{align*}
as $x\to\infty$. 
The first identity easily follows from the fact that 
$$\P\{\hat W<a^{-\alpha+x}\}\ge \P\{\hat W<a^{-1+x}\} \to 1$$ 
as $x\to\infty$. For the second one we 
observe that $a^{-\alpha-x}\downarrow 0$ as $x\to\infty$ and
so we can derive the above asymptotics from the left tail  asymptotics of the martingale limit obtained in 
Lemma~\ref{secsec}. As $L$ is multiplicatively periodic with period $a$ and $x$ is an integer we have  
\begin{align*}
1-p_1^{-x}\, \frac{p_1^{-\alpha}}{L(a^{-\alpha})}\P\big\{\hat W<a^{-\alpha-x}\big\}
&\asymp p_1^{-x}\big(L(a^{-\alpha-x})a^{-\tau(\alpha+x)}-\P\big\{\hat W<a^{-\alpha-x}\big\}\big)\\
&\asymp p_1^{-x}a^{-\tau\lambda(\alpha+x)}
\asymp p_1^{(\lambda-1)x}
\end{align*}
since $a^{-\tau}=p_1$. \end{proof}
\bigskip


\section{Immigration}
\label{s_imm}

In this section we introduce notation relevant to the immigration and prove some preliminary results which 
will be necessary to deal with it. Denote by
\begin{align*}
h(z)=\sum_{n=\nu}^{\infty}q_nz^n,\qquad z\in \mathcal{D}_1
\end{align*}
the generating function of the random variable $Y$. 
We will see in Section~\ref{s_main} that in order to find the left tail asymptotics of $\WW$ we will 
have to control the products 
\begin{align}
\label{pro}
\prod_{n=1}^N h(f_n(z))
\end{align}
for $N\in \N$. Since $f_n(z)$ will typically tend to zero as $n\to\infty$, the function $h(z)$ will essentially
behave according to its leading term $q_{\nu}z^{\nu}$. This observation suggests using the decomposition 
\begin{align}
\label{hhh_d}
h(f_n(z))=q_{\nu} f^{\nu}_n(z)\Big(1+q_{\nu}^{-1}\sum_{l>\nu}q_lf_n^{l-\nu}(z)\Big)=q_{\nu} f^{\nu}_n(z)B_n(z),
\end{align}
where the functions $B_n$ are defined by  
\begin{align}
\label{defB}
B_n(z)=1+q_{\nu}^{-1}\sum_{l>\nu}q_lf_n^{l-\nu}(z), \qquad z\in\mathcal{D}_1,
\end{align}
Combining~\eqref{fff} and~\eqref{hhh_d} we obtain 
\begin{align}
\label{sss}
h\big(f_n(z)\big)=q_{\nu}p_1^{n\nu}z^{\nu}B_n(z)\prod_{j=0}^{n-1}A_j^{\nu}(z).
\end{align}

For any $r\in (0,1)$ and $\theta\in (0,\pi/2)$, denote\begin{align*}
\mathcal{D}_{r,\theta}=\{z\in\mathcal{D}_r: z\neq 0, |\,\text{\rm arg}\, z|\le \theta\}. 
\end{align*}

\begin{lemma}
\label{ddd}
Let $r\in (0,1)$. 
As $n\to\infty$, 
\begin{align*}
\prod\limits_{j=1}^n B_j(z)\to B(z) 
\end{align*}
uniformly on $\mathcal{D}_r$, where $B$ is a bounded holomorphic function. 
\smallskip 

Further, there exists $\theta\in (0,\pi/2)$ such that on $\mathcal{D}_{r,\theta}$

(a) $S(z)\neq 0$ and, in particular, $A_k(z)\neq 0$ for all $k$; 

(b) $B(z)\neq 0$; 

(c)  
\vspace{-7ex}

\begin{align*}
\prod\limits_{j=1}^n A^{-j}_j(z)\to C(z)\neq 0
\end{align*}
uniformly as $n\to\infty$, where $C$ is a bounded holomorphic function.
\end{lemma}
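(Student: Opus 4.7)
First I would establish convergence of $\prod_{j=1}^n B_j(z)$ to a bounded holomorphic $B(z)$ on $\mathcal{D}_r$. The key input is that $|f_n(z)| \le \kappa\, p_1^n$ uniformly on $\mathcal{D}_r$, which follows from~\eqref{fff} and the uniform convergence $f_n/p_1^n\to S$ on compacts in the open unit disc implied by~\eqref{infps}. From the definition~\eqref{defB},
\[
|B_j(z) - 1| \le q_\nu^{-1}|f_j(z)| \sum_{l>\nu} q_l |f_j(z)|^{l-\nu-1} \le \kappa'\, p_1^j,
\]
and since $\sum p_1^j<\infty$ the infinite product converges uniformly and absolutely on $\mathcal{D}_r$ to a bounded holomorphic limit $B$.

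For the sector claims (a) and (b), my strategy is to choose $\theta$ small enough to simultaneously avoid the zeros of every $A_j$ and every $B_j$ on $\mathcal{D}_r$. Each $A_j$ and each $B_j$ is real and strictly positive on $[0,r]$, since the coefficients in~\eqref{defa} and~\eqref{defB} are non-negative and $f_j$ maps $[0,r]$ into $[0,1)$. From the estimate above for $B_j$ and from Lemma~\ref{second} for $A_j$, both families tend to $1$ uniformly on $\mathcal{D}_r$, so for all sufficiently large $j$ these functions are nonvanishing throughout $\mathcal{D}_r$. Only finitely many exceptional indices remain; for each, the function is holomorphic on a neighbourhood of $\bar{\mathcal{D}}_r$ and not identically $1$ (in the Schr\"oder case $\lambda<\infty$ and $\nu<\infty$), so its zeros in $\bar{\mathcal{D}}_r$ form a finite set disjoint from $[0,r]$. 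Taking $\theta$ small enough that $\mathcal{D}_{r,\theta}$ lies within the positive minimum distance from $[0,r]$ to all these finitely many zeros yields a common sector on which every $A_j$ and $B_j$ is nonzero, whence $S(z)=z\prod_k A_k(z)\neq 0$ and $B(z)=\prod_j B_j(z)\neq 0$ on $\mathcal{D}_{r,\theta}$.

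For (c), the sector $\mathcal{D}_{r,\theta}$ is simply connected and each $A_j$ is nonvanishing there with $A_j(0)=1$, so I can define the holomorphic branch of $\log A_j$ normalised by $\log A_j(0)=0$, and $A_j^{-j}(z)=\exp(-j\log A_j(z))$ is well defined. Since $\mathcal{D}_{r,\theta}\subseteq \mathcal{D}_r\cap\mathcal{B}$ (by (a) plus $|z|<1$), Lemma~\ref{second}(a) together with the boundedness of $S$ gives $|A_j(z)-1|\le \kappa\, p_1^{j(\lambda-1)}$ uniformly on $\mathcal{D}_{r,\theta}$. Hence $|j\log A_j(z)|\le \kappa'\, j\, p_1^{j(\lambda-1)}$, which is summable since $p_1<1$, so the series $\sum_j j\log A_j(z)$ converges uniformly and absolutely to a bounded holomorphic function. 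Exponentiating yields uniform convergence of $\prod_{j=1}^n A_j^{-j}(z)$ to the bounded holomorphic, nowhere vanishing limit $C(z)$. The principal obstacle throughout is the simultaneous choice of $\theta$: each individual $A_j$ or $B_j$ only forbids a local sector, but uniform convergence to $1$ reduces the matter to finitely many functions, where compactness then produces a single admissible $\theta$ that also handles~(c).
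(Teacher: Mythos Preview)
Your proof is correct. The argument for the convergence of $\prod B_j$ and for part~(c) matches the paper's essentially line for line: both use the geometric bound $|B_j-1|=O(p_1^j)$ for the first claim, and Lemma~\ref{second}(a) together with the summability of $j\,p_1^{j(\lambda-1)}$ for~(c).

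Where you diverge is in (a) and (b). The paper does not work factor by factor; it simply observes that $S$ and $B$ are holomorphic on $\mathcal{D}_r$ (as uniform limits of holomorphic functions), hence have only isolated zeros, and are strictly positive on the real segment $(0,r]$ (respectively $\ge 1$ on $[0,r]$), so a sufficiently thin sector $\mathcal{D}_{r,\theta}$ misses all of them. Your route instead shows each individual $A_j$ and $B_j$ is nonzero on a common sector, first disposing of large $j$ via uniform closeness to $1$ and then handling the finitely many remaining indices by compactness. This is valid and even gives a little more (direct nonvanishing of each factor, which you reuse in~(c)), but it is more laborious than the paper's two-line argument on the limits; note that $S(z)\neq 0$ on the sector already forces every $A_k(z)\neq 0$ via~\eqref{infps}, so the factor-by-factor work is not strictly necessary. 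One small correction: the clause ``not identically~$1$'' is not the hypothesis you need for finiteness of the zero set---what matters is ``not identically~$0$'', which is automatic since $A_j(0)=B_j(0)=1$. (If $Y$ happens to be deterministic then $B_j\equiv 1$, but then there are simply no zeros to avoid.)
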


\begin{proof} 
Using~\eqref{defB}
and monotonicity of $f_n$ as well as that it is bounded by $1$ on $(0,1)$ we get
\begin{align}
\label{expb}
|B_j(z)-1|\le q_{\nu}^{-1}\sum_{l>\nu}q_l |f_j^{l-\nu}(z)|\le q^{-1}_{\nu}f_j(r)\sim q_{\nu}^{-1}S(r)p_1^j
\end{align}
for all $z\in\mathcal{D}_{r}$ uniformly. Since the sum over $j$ of the expressions on the right hand side is finite, 
$\prod\limits_{j=1}^n B_j(z)$ converges uniformly on $D_r$, and hence $B$ is  
holomorphic and bounded.
\medskip

Let us now prove the second part of the lemma.
\medskip

(a) Observe that $S$ is holomorphic on $\mathcal{D}_r$ as a uniform limit of holomorphic functions. 
Hence it can only have isolated zeroes, and it suffices to observe that $S(s)\neq 0$ for $s\in (0,1)$ and choose $\theta$
sufficiently small. 
\medskip

(b)  Similarly, $B$ can only have isolated zeroes, and it is easy to see from~\eqref{defB} that 
$B(s)\ge 1$ for $s\in [0,1]$.  Hence one can choose $\theta$ small enough so that $B$ has no zeroes on 
$\mathcal{D}_{r,\theta}$. 
\medskip

(c) First we observe that by (a) the product is well defined for a sufficiently small~$\theta$. Second, 
by Lemma~\ref{second} we know that 
\begin{align*}
j\log A_j(z)\sim c S(z)j p_1^{j(\lambda-1)}
\end{align*}
uniformly as $j\to \infty$, where $S(z)\neq 0$ by (a) if $\theta$ is small enough. Again, 
the sum over $j$ of the expressions on the right hand side is finite, and we can apply the same arguments as for the first product.
\end{proof}

\begin{lemma} 
\label{hhh}

(a) Let $r\in (0,1)$. There is $c>0$ such that, for all $s\in (0,r]$, $N$ and $z\in \mathcal{D}_s$,  
\begin{align}
\label{sum1}
\Big|\prod_{n=1}^{N}h\big(f_n(z)\big)\Big|
&\le c \frac{|z|}{s}\, q_{\nu}^Np_1^{-\frac{\nu N(N+1)}{2}}f_{N+1}^{\nu N}(s).
\end{align}

(b) Let $r\in (0,1)$ and let $\theta$ be chosen according to Lemma~\ref{ddd}. 
Then 
\begin{align}
\prod_{n=1}^{N}h\big(f_n(z)\big)
&\sim 
F(z)q_{\nu}^Np_1^{-\frac{\nu N(N+1)}{2}}f_{N+1}^{\nu N}(z)
\label{prog2}
\end{align} 
as $N\to\infty$ uniformly on
$\mathcal{D}_{r,\theta}$, where $F$ is a bounded function on $\mathcal{D}_{r,\theta}$, which is nowhere 
equal to zero. 
\end{lemma}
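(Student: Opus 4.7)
The plan is to expand each factor using the decomposition \eqref{sss}, which gives $h(f_n(z)) = q_{\nu}\, p_1^{n\nu}\, z^{\nu}\, B_n(z) \prod_{j=0}^{n-1} A_j^{\nu}(z)$. Multiplying over $n = 1,\dots,N$ and interchanging the product order (each $A_j^\nu$ appears exactly $N-j$ times for $0 \le j \le N-1$) yields
\begin{align*}
\prod_{n=1}^{N} h(f_n(z)) = q_{\nu}^N\, p_1^{\nu N(N+1)/2}\, z^{\nu N} \Bigl(\prod_{n=1}^N B_n(z)\Bigr) \prod_{j=0}^{N-1} A_j^{\nu(N-j)}(z).
\end{align*}
On the other hand, \eqref{fff} gives $f_{N+1}(z) = p_1^{N+1}\, z \prod_{j=0}^{N} A_j(z)$, so
\begin{align*}
q_{\nu}^N\, p_1^{-\nu N(N+1)/2}\, f_{N+1}^{\nu N}(z) = q_{\nu}^N\, p_1^{\nu N(N+1)/2}\, z^{\nu N}\prod_{j=0}^{N} A_j^{\nu N}(z).
\end{align*}
Taking the quotient and using $\nu(N-j) - \nu N = -\nu j$ (which makes the $j=0$ term disappear) gives the key identity
\begin{align*}
\frac{\prod_{n=1}^N h(f_n(z))}{q_{\nu}^N\, p_1^{-\nu N(N+1)/2}\, f_{N+1}^{\nu N}(z)} = \prod_{n=1}^N B_n(z)\cdot \Bigl(\prod_{j=1}^N A_j^{-j}(z)\Bigr)^{\!\nu}.
\end{align*}

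Part (b) then follows immediately from Lemma~\ref{ddd}: on $\mathcal{D}_{r,\theta}$ both products on the right converge uniformly to bounded, nowhere vanishing holomorphic functions $B(z)$ and $C(z)$, so the quotient converges uniformly to $F(z) := B(z)\, C(z)^{\nu}$, establishing \eqref{prog2}.

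For part (a) I will exploit that $A_j$ and $B_n$ are power series with nonnegative Taylor coefficients (inherited from $f$ and $h$ respectively), so for $z \in \mathcal{D}_s$ one has $|A_j(z)| \le A_j(s)$ and $|B_n(z)| \le B_n(s)$. Using $A_j(s) \ge 1$ on $[0,1)$ to estimate $A_j(s)^{\nu(N-j)} \le A_j(s)^{\nu N}$, together with the uniform boundedness of $\prod_n B_n(s)$ from Lemma~\ref{ddd} and a uniform lower bound $A_N(s)^{\nu N} \ge c' > 0$ (justified by Lemma~\ref{second}(a), whose geometric decay $A_N(s) - 1 = O(p_1^{N(\lambda-1)})$ makes $\nu N \log A_N(s) \to 0$), one obtains an upper bound of the form $c''\, q_{\nu}^N\, p_1^{\nu N(N+1)/2}\, |z|^{\nu N} \prod_{j=0}^N A_j(s)^{\nu N}$. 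The final step is to extract the factor $|z|/s$ via $|z|^{\nu N} \le (|z|/s)\, s^{\nu N}$ and apply the second display above in reverse to recognize the right-hand side as $c\, (|z|/s)\, q_{\nu}^N\, p_1^{-\nu N(N+1)/2}\, f_{N+1}^{\nu N}(s)$, yielding \eqref{sum1}. The main bookkeeping challenge throughout is keeping track of the multiplicities of $A_j$ so that the residual telescopes into the product $\prod A_j^{-\nu j}$ that Lemma~\ref{ddd}(c) is precisely tailored to control; once the correct algebraic identity is isolated, both parts reduce to routine applications of the already-established convergence and monotonicity facts.
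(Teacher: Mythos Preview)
Your proof is correct and follows essentially the same route as the paper: both start from the identity \eqref{sss}, collect the double product of the $A_j^\nu$ into $\prod_{j} A_j^{\nu(N-j)}$, and compare it with $f_{N+1}^{\nu N}$, arriving at the residual factor $\prod_j A_j^{-\nu j}$ and the definition $F=B\,C^{\nu}$. One small remark: your appeal to Lemma~\ref{second}(a) to secure $A_N(s)^{\nu N}\ge c'$ is superfluous, since $A_N(s)\ge 1$ already gives $\prod_{j=0}^{N-1}A_j(s)^{\nu N}\le \prod_{j=0}^{N}A_j(s)^{\nu N}$ directly; the paper simply includes the harmless factor $A_N^{\nu(N-N)}=1$ from the outset.
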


\begin{proof}
It follows from~\eqref{sss} that, for all $N\ge 1$, we have
\begin{align}
\label{loc14}
\prod_{n=1}^{N}h\big(f_n(z)\big)
&=q_{\nu}^Np_1^{\frac{\nu N(N+1)}{2}}z^{\nu N}
\Big(\prod_{n=1}^{N}\prod_{j=0}^{n-1}A_j^{\nu}(z)\Big)
\Big(\prod_{n=1}^{N}B_n(z)\Big).
\end{align}

(a) Observe that the last term is uniformly (in $s$ and $z$) bounded by Lemma~\ref{ddd}.
Further, it follows from~\eqref{defa} that $|A_j(z)|\le  A_j(s)$ for all $j$ on $\mathcal{D}_s$. Hence 
\begin{align*}
\Big|z^{\nu N}
\prod_{n=1}^{N}\prod_{j=0}^{n-1}A_j^{\nu}(z)\Big|\le |z|s^{\nu N-1}\prod_{n=1}^{N}\prod_{j=0}^{n-1}A_j^{\nu}(s)
=|z|s^{\nu N-1}\prod_{j=0}^{N}A_j^{\nu(N-j)}(s),  
\end{align*}
where we included $j=N$ into the product since the corresponding term equals one. 
As all $A_j(s)>1$ we can drop $j$ in the exponent which together with~\eqref{fff} 
implies 
\begin{align*}
\Big|z^{\nu N}
\prod_{n=1}^{N}\prod_{j=0}^{n-1}A_j^{\nu}(z)\Big|\le\frac{|z|}{s}p_1^{-\nu N(N+1)}f_{N+1}^{\nu N}(s). 
\end{align*}

(b) 
It follows from Lemma~\ref{ddd} that all $A_j(z)\neq 0$
and so we can rearrange the middle product in~\eqref{loc14} and use~\eqref{fff} to get 
\begin{align*}
z^{\nu N}\prod_{n=1}^{N}\prod_{j=0}^{n-1}A_j^{\nu}(z)=z^{\nu N}\prod_{j=0}^{N}A_j^{\nu(N-j)}(z)
=p_1^{-\nu N(N+1)}f_{N+1}^{\nu N}(z)\prod_{j=0}^{N}A_j^{-\nu j}(z).
\end{align*}
Now the statement of the lemma follows from Lemma~\ref{ddd} with $F(z)=B(z)C^{\nu}(z)$.
\end{proof}
\bigskip


\section{Finer properties of the Schr\" oder function $S$}
\label{s_fin}
\medskip

\begin{lemma}
\label{logS}
The function $s\mapsto \log S(s)$ is well-defined and analytic on $(0,1)$, and 
\begin{align*}
(\log S(s))'\ge 1/s
\end{align*}
for all $s\in (0,1)$.
\end{lemma}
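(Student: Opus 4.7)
The plan is to read everything off the product representation $S(z)=z\prod_{j=0}^{\infty}A_j(z)$ from~\eqref{infps}, since each factor has non-negative Taylor coefficients and is bounded below by $1$ on $(0,1)$.

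First I would establish positivity and analyticity. Each $A_j$ is a convergent power series with non-negative coefficients and constant term $1$, so $A_j(s)\ge 1$ for every $s\in[0,1)$. The infinite product $\prod_{j=0}^\infty A_j(z)$ converges uniformly on compact subsets of the open unit disc (as recalled after~\eqref{infps}), so $S$ is holomorphic on that disc; restricted to $(0,1)$ we have $S(s)\ge s>0$, and hence $\log S(s)$ is well-defined and real-analytic on $(0,1)$. Taking logarithms converts the product into the series
\begin{align*}
\log S(s)=\log s+\sum_{j=0}^{\infty}\log A_j(s),
\end{align*}
which is legitimate since each $\log A_j(s)$ is defined and non-negative on $(0,1)$.

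Next I would justify term-by-term differentiation. On each $\mathcal{D}_r$ with $r<1$, the tail of $\sum_j\log A_j(z)$ decays geometrically: from the proof of Lemma~\ref{second} (see the estimate before~\eqref{loc7}) one has $|\log A_j(z)|\le C\,p_1^{j(\lambda-1)}$ uniformly on $\mathcal{D}_r$. Hence the series converges uniformly on $\mathcal{D}_r$, and by Weierstrass's theorem its derivative is obtained termwise:
\begin{align*}
(\log S(s))'=\frac{1}{s}+\sum_{j=0}^{\infty}\frac{A_j'(s)}{A_j(s)}.
\end{align*}

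Finally I would deduce the lower bound. By~\eqref{defa}, $A_j(s)=1+p_1^{-1}\sum_{l>1}p_l f_j^{l-1}(s)$ is a power series in $s$ with non-negative coefficients (for $s>0$ this follows because $f_j$ itself has non-negative coefficients and no constant term), so $A_j$ is non-decreasing on $[0,1)$ with $A_j(s)\ge 1$. Consequently every summand $A_j'(s)/A_j(s)$ is non-negative, which yields $(\log S(s))'\ge 1/s$. The only genuinely delicate point—uniform convergence of the log-series and its derivative on compact subsets of $(0,1)$—is handled by the geometric decay already extracted in Lemma~\ref{second}, so I do not expect a serious obstacle here.
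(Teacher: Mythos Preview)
Your argument is correct, but it takes a slightly different route from the paper's. You work directly with the product representation $S(s)=s\prod_{j\ge 0}A_j(s)$, take logarithms, differentiate term by term, and observe that each $(\log A_j)'\ge 0$ since $A_j$ is non-decreasing with $A_j\ge 1$ on $(0,1)$. The paper instead uses the limit form~\eqref{defS}: since $S=\lim_n p_1^{-n}f_n$ uniformly on compacta, one gets $(\log S(s))'=\lim_n f_n'(s)/f_n(s)$, and the inequality $f_n'(s)/f_n(s)\ge 1/s$ holds for each $n$ because $f_n$ is a power series with non-negative coefficients and no constant term (so $sf_n'(s)\ge f_n(s)$ termwise). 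Both approaches ultimately exploit non-negativity of coefficients; the paper's is a little shorter because it differentiates a single uniform limit rather than an infinite series, whereas yours makes the contribution of each factor $A_j$ explicit. One small caution: your uniform bound $|\log A_j(z)|\le C p_1^{j(\lambda-1)}$ on the full disc $\mathcal{D}_r$ is not quite what is written in the proof of Lemma~\ref{second} (which gives an asymptotic on $\mathcal{D}_r\cap\mathcal{B}$), but the bound $|A_j(z)-1|\le A_j(r)-1$ together with $A_j(r)-1\asymp p_1^{j(\lambda-1)}$ suffices, and in any case the paper already records uniform convergence of the product on $\mathcal{D}_r$, which is all you need for Weierstrass.
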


\begin{proof} Since $S(s)\ge s$ by~\eqref{infps} the logarithm is well defined in a complex neighbourhood of $s$, and 
the analyticity follows from $\log S$ being holomorphic there. Differentiating the uniform limit of holomorphic functions~\eqref{defS}
we obtain 
\begin{align*}
(\log S(s))'=\frac{S'(s)}{S(s)}=\lim_{n\to\infty}\frac{f'_n(s)}{f_n(s)}\ge 1/s
\end{align*} 
since this inequality is true term by term for $f_n'$ and $f_n$. 
\end{proof}

Denote 
\begin{align*}
\psi(s)=\log S(\phi(s)), \qquad s>0.
\end{align*}

\begin{lemma} 
\label{chooseu}
The function $\psi$ is analytic, $\psi''(s)>0$ for all $s$, and
\begin{align}
\label{limits}
\lim_{s\to\infty}\psi'(s)=0\qquad\text{ and }\qquad 
\lim_{s\downarrow 0}\psi'(s)=-\infty.
\end{align}
\end{lemma}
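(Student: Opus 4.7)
The analyticity of $\psi$ is immediate: $\phi$ is analytic on $\{\mathrm{Re}\,s>0\}$ with values in $(0,1)$, while $\log S$ is analytic on $(0,1)$ by Lemma~\ref{logS}. The backbone of the rest of the proof is the Schr\"oder-type functional equation
\begin{align*}
\psi(as)=\psi(s)+\log p_1,
\end{align*}
obtained by combining the Poincar\'e identity $\phi(as)=f(\phi(s))$ with $S(f(z))=p_1 S(z)$ (itself immediate from the defining limit~\eqref{defS}). Differentiating once or twice gives $\psi'(as)=\psi'(s)/a$ and $\psi''(as)=\psi''(s)/a^{2}$.

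For the limits~\eqref{limits}, I first note that $\psi'(s)=(\log S)'(\phi(s))\,\phi'(s)<0$, since $(\log S)'\ge 1/\phi(s)>0$ by Lemma~\ref{logS} and $\phi'(s)=-\E[W e^{-sW}]<0$. By continuity on the compact interval $[1,a]$, there exist constants $0<c\le C$ with $-C\le\psi'(t)\le -c$ for $t\in[1,a]$. Any $s>0$ may be written as $s=a^{n}s_0$ with $n\in\Z$ and $s_0\in[1,a)$, and then the functional equation yields $\psi'(s)=\psi'(s_0)/a^{n}$; hence $\psi'(s)\to 0$ as $s\to\infty$ (i.e.~$n\to\infty$) and $\psi'(s)\le -c\,a^{-n}\to-\infty$ as $s\downarrow 0$ (i.e.~$n\to-\infty$).

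For strict convexity, I would approximate $\psi$ by strictly convex functions. Set
\begin{align*}
\psi_n(s)=\log\phi(a^n s)-n\log p_1=\log\bigl(f_n(\phi(s))/p_1^n\bigr);
\end{align*}
by~\eqref{defS} these converge to $\psi$ uniformly on compact subsets of $(0,\infty)$, which upgrades to $\psi_n''\to\psi''$ by standard uniform-convergence arguments for analytic functions on complex neighbourhoods of $(0,\infty)$. Each $\psi_n''(s)=a^{2n}\mathrm{Var}_{a^n s}(W)>0$, the variance being with respect to the tilted measure $e^{-tW}d\P/\phi(t)$, which is non-degenerate since $W$ has a continuous density. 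Passing to the limit gives $\psi''\ge 0$. The \emph{main obstacle} is upgrading this to the strict inequality; for this I would invoke the Schr\"oder-case density asymptotic $w(\e)\asymp\hat L(\e)\,\e^{\tau-1}$ from~\cite{bb93}, with $\hat L$ bounded above and below by positive constants, and show via a change of variables $u=tw$ in the tilted density that $t^{2}\mathrm{Var}_{t}(W)$ is bounded below by a positive constant for all sufficiently large $t$. Combined with $\psi_n''(s)=a^{2n}\mathrm{Var}_{a^n s}(W)$ this yields the desired bound $\psi''(s)\ge c/s^{2}>0$.
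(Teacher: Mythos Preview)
Your arguments for analyticity and for the limits~\eqref{limits} are essentially the paper's: both exploit the functional equation $\psi(as)=\psi(s)+\log p_1$ and its differentiated form $a\psi'(as)=\psi'(s)$. (The paper derives the limits \emph{after} proving $\psi''>0$, using monotonicity of $\psi'$; you instead bound $\psi'$ on the compact $[1,a]$ directly, which is equally valid and avoids any circularity.)

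For strict convexity the routes diverge. The paper computes $\psi''$ explicitly and feeds in two inequalities from~\cite{FW09}: the log-convexity $\phi''\phi>(\phi')^2$ (their~(75)) and $(sf_n'(s)/f_n(s))'>0$ (their~(63)), the latter passed to the limit $S$. A short algebraic manipulation then yields
\[
\psi''(s)>\frac{(\phi'(s)-1)\,\phi'(s)\,(S'(\phi(s)))^2}{S^2(\phi(s))}\ge 0,
\]
with the first inequality strict because $\phi'<0<1$. Your approach---writing $\psi_n(s)=\log\phi(a^ns)-n\log p_1$, recognising $\psi_n''(s)$ as $a^{2n}$ times a tilted variance, and letting $n\to\infty$---is a natural probabilistic alternative and gives $\psi''\ge 0$ cleanly. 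The upgrade to strict inequality via the density asymptotic $w(\e)\asymp\e^{\tau-1}$ from~\cite{bb93} is correct in outline (under the substitution $u=tW$ the tilted law is sandwiched between Gamma-type densities, so its variance stays bounded below), but it imports an external result and some tail analysis that the paper's two-line computation sidesteps entirely. Both arguments are sound; the paper's is shorter and more self-contained, while yours makes the probabilistic content of the inequality transparent.
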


\begin{proof} The function $\psi$ is analytic as
a composition of analytic functions. Compute 
\begin{align*}
\psi'(s)&=\frac{\phi'(s)S'(\phi(s))}{S(\phi(s))}\\
\psi''(s)&=\frac{\big(\phi''(s)S'(\phi(s))+(\phi'(s))^2S''(\phi(s))\big)S(\phi(s))-\phi'(s)(S'(\phi(s)))^2}{S^2(\phi(s))}
\end{align*}
It was shown in~\cite[(75)]{FW09} that $\phi''(s)\phi(s)>(\phi'(s))^2$ for all $s>0$. 
Further, $S$ is positive according to~\eqref{infps} and $S'$ is positive by Lemma~\ref{logS}. This implies 
\begin{align}
\label{loc5}
\psi''(s)>\frac{\big( S'(\phi(s))+\phi(s)S''(\phi(s))\big)(\phi'(s))^2 S(\phi(s))-\phi(s)\phi'(s)(S'(\phi(s)))^2}{S^2(\phi(s))\phi(s)}.
\end{align}
Using~\cite[(63)]{FW09} for $f_n$ instead of $f$ we obtain  
\begin{align*}
\Big(\frac{s f_n'(s)}{f_n(s)}\Big)'>0
\end{align*}
for all $s\in (0,1)$ and all $n$, 
which extends to 
\begin{align*}
\Big(\frac{s S'(s)}{S(s)}\Big)'\ge 0
\end{align*}
for all $s\in (0,1)$ since one can differentiate uniformly convergent series of analytic functions. This implies
\begin{align*}
(S'(s)+sS''(s))S(s)\ge s (S'(s))^2
\end{align*}
for all $s\in (0,1)$. Substituting this into~\eqref{loc5} we get 
\begin{align*}
\psi''(s)>\frac{(\phi'(s)-1)\phi'(s)(S'(\phi(s)))^2}{S^2(\phi(s))}\ge 0
\end{align*}
for all $s>0$ since $\phi'(s)\le 0$. 
\smallskip 

To prove~\eqref{limits} we observe that by the Poincar\' e functional equation and~\eqref{defS} we have 
\begin{align*}
\psi(sa^n)=\log S(f_n(\phi(s)))=\log (p_1^nS(\phi(s)))=\psi(s)+n\log p_1
\end{align*}
and so
\begin{align*}
a^n \psi'(sa^n)=\psi'(s)
\end{align*}
for all $s>0$ and all $n$. Since $\psi''$ is positive $\psi'$ is decreasing and so  
\begin{align*}
&\lim_{s\to\infty}\psi'(s)=\lim_{n\to\infty}\psi'(a^n)=\psi'(1)\lim_{n\to\infty} a^{-n}=0,\\
&\lim_{s\downarrow 0}\psi'(s)=\lim_{n\to\infty}\psi'(a^{-n})=\psi'(1)\lim_{n\to\infty} a^{n}=-\infty,
\end{align*}
as required.
\end{proof}
\bigskip 


\section{The random variables $\mathcal{V}_n$ and their Laplace transforms}
\label{s_main}
\medskip

Denote by 
\begin{align}
\label{phistar}
\phi_{*}(z)=\E e^{-z\WW}, \qquad z\in\C, \mathcal{R}e\, z\ge 0.
\end{align}
the Laplace transform of the random variable $\WW$. Observe that 
the $m$-th generation of the Galton--Watson tree with immigration  can be written as 
\begin{align}
\label{cococo}
\ZZ_n=\sum_{i=1}^{Y_0}Z_n^{\ssup{i}}+\hat{\ZZ}_{n-1},
\end{align}
where where $Z^{\ssup{i}}$ is the Galton--Watson process corresponding to the $i$-th immigrant in the generation zero, 
and $\hat{\ZZ}$ is the Galton--Watson process with immigration starting with the immigrants of generation one. 
It is easy to see that the process $\hat{\ZZ}$ and all processes $Z^{\ssup{i}}$ are independent, and $\hat{\ZZ}$
has the same distribution as $\ZZ$. Dividing by $a^n$ and passing to the limit we obtain  
\begin{align}
\label{loc10}
\WW=\sum_{i=1}^{Y_0}W_i+a^{-1}\hat{\WW},
\end{align}
where $W_{i}$ are the martingale limits of the processes $Z^{\ssup{i}}$, and $\hat{\WW}$ is the limit random variable corresponding to 
$\hat{\ZZ}$. Clearly, $\hat{\WW}$ and all $W_{i}$ are 
independent and have the same distribution as $\WW$ and $W$, respectively. This implies 
\begin{align}
\label{it}
\phi_*(z)=h(\phi(z))\phi_*(za^{-1}).
\end{align} 

Further, for any $k\ge -1$, denote by
\begin{align}
\label{loc11}
\VV_k=a^{-k}\sum_{i=1}^{\nu(k+1)} W^{\ssup k}_{i}+a^{-k-1}\hat{\WW}^{\ssup k},
\end{align}
where $W^{\ssup k}_i$, $1\le i\le \nu(k+1)$, are the martingale limits of independent Galton--Watson processes 
indexed by the first $\nu$ immigrants of all generations between $0$ and $k$, 
and $\hat{\WW}^{\ssup k}$ is the limit random variable of the independent Galton--Watson process with immigration starting with 
the immigrants of generations strictly after $k$. Similarly to~\eqref{loc10} we observe that 
\begin{align*}
\WW=\VV_k\quad\text{ on the event }\{\mathcal{K}>k\}
\end{align*}
since the minimal number of individuals in generation $k$ is $\nu(k+1)$ given by $\nu$
immigrants in each generation and having just one offspring each.
Denote the Laplace transform of $\VV_k$ by 
\begin{align*}
\phi_k(z)=\E e^{-z\VV_k}, \qquad z\in\C, \mathcal{R}e\, z\ge 0. 
\end{align*}
It follows from~\eqref{loc11} that 
\begin{align}
\label{phik}
\phi_k(z)=\phi(za^{-k})^{\nu(k+1)}\phi_*(za^{-k-1}).
\end{align} 
Observe that $\VV_{-1}=\WW$ and $\phi_{-1}=\phi_*$. We also denote $Z_{-1}=0$. 
\medskip

For each $k\ge -1$ we have  
\begin{align}
\P\big\{\mathcal{K}>k, \mathcal{W}<\e\big\}
&=\P\Big\{Z_k=\nu(k+1), \VV_k<\e\Big\}
=q_{\nu}^{k+1}p_1^{\frac{\nu k (k+1)}{2}}\P\big\{\VV_k<\e\big\}.
\label{red}
\end{align}
This means that our main aim now is to understand the left tail probabilities of $\VV_k$. 
Those corresponding to $k=-1$ will give us the left tail asymptotics of $\WW$, and those with 
$k=\gamma(\e)+x$ will control the fluctuations. Recall that the left tail of $\VV_k$ is 
closely related to the behaviour of the Laplace transform $\phi_k$ for large values of the argument. The next lemma
enables us to understand it through the asymptotic properties of the iterations $f_n$ and the branching mechanism $h$
in the same spirit as the Poincar\' e functional equation does it for a standard Galton--Watson tree.    
\medskip

For any $k$ and $N$, denote 
\begin{align*}
C_{k,N}=q_{\nu}^{N-k-1}p_1^{\frac{\nu N(N+1)-\nu k (k+1)}{2}}.
\end{align*}
Further, for any $z\in \mathcal{B}$, denote 
\begin{align*}
\Psi_{k,N}(z)=\Big(1-\frac{R_{N-k}(z)}{S(z)}\Big)^{\nu N}.
\end{align*}




\begin{lemma} 
\label{iter}

(a) Let $r\in (0,1)$. There is $c>0$ such that 
\begin{align}
\label{ppp}
|\phi_k(za^N)|
&\le c\frac{ |\phi(z)|}{s}\, C_{k,N}S(s)^{\nu N}.
\end{align}
for all $k<N$, $s\in (0,r]$, and all $z$ satisfying $|\phi(z)|\le s$.  
\medskip

(b) Let $r\in (0,1)$ and let $\theta$ be chosen according to Lemma~\ref{ddd}. 
Then 
\begin{align}
\label{mainass}
\phi_k(za^N)
&\sim
\Psi_{k,N}(\phi(z))\phi_*(z)F(\phi(z))  C_{k,N}
S(\phi(z))^{\nu N}
\end{align}
as $N-k\to\infty$ uniformly on $\{z: \phi(z)\in \mathcal{D}_{r,\theta}\}$.
\end{lemma}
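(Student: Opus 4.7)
The plan is to collapse the recursion defining $\phi_k(za^N)$ into a single closed product of iterates of $\phi$ and $h$, and then feed this directly into Lemma~\ref{hhh}. Starting from~\eqref{phik} with argument $za^N$, I rewrite $\phi_*(za^{N-k-1})$ by iterating the immigration identity~\eqref{it} exactly $N-k-1$ times down to $\phi_*(z)$, and use the Poincar\'e functional equation~\eqref{pfe} to replace every $\phi(za^n)$ by $f_n(\phi(z))$. This produces the master identity
\begin{align*}
\phi_k(za^N)=f_{N-k}(\phi(z))^{\nu(k+1)}\,\phi_*(z)\prod_{n=1}^{N-k-1} h\bigl(f_n(\phi(z))\bigr),
\end{align*}
which reduces the whole lemma to the analysis already carried out in Lemma~\ref{hhh}.

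For (a), I combine the trivial bound $|\phi_*(z)|\le 1$ (valid since $\mathcal{R}e\,z\ge 0$), the monotonicity estimate $|f_{N-k}(\phi(z))|\le f_{N-k}(|\phi(z)|)\le f_{N-k}(s)$, and the bound~\eqref{sum1} of Lemma~\ref{hhh} applied with its $N$ taken to be $N-k-1$. The two powers of $f_{N-k}(s)$, of orders $\nu(k+1)$ and $\nu(N-k-1)$, merge into a single $f_{N-k}(s)^{\nu N}$; the elementary estimate $f_{N-k}(s)\le p_1^{N-k}S(s)$, which follows from~\eqref{fff} together with $A_j(s)\ge 1$ on $[0,1)$, then upgrades this to $p_1^{\nu N(N-k)}S(s)^{\nu N}$. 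Collecting the $p_1$-exponents from~\eqref{sum1} and from this last step, and checking
\begin{align*}
-\tfrac{\nu(N-k-1)(N-k)}{2}+\nu N(N-k)=\tfrac{\nu(N-k)(N+k+1)}{2}=\tfrac{\nu(N(N+1)-k(k+1))}{2},
\end{align*}
reproduces exactly the $p_1$-exponent hidden in $C_{k,N}$, yielding the bound~\eqref{ppp}.

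For (b), the hypothesis $\phi(z)\in\mathcal{D}_{r,\theta}$ places us in the domain where Lemma~\ref{hhh}(b) gives the precise asymptotic~\eqref{prog2} for the product, and where $S(\phi(z))\neq 0$ by Lemma~\ref{ddd}(a), so that division by $S(\phi(z))$ is legitimate. I substitute~\eqref{prog2} into the master identity and, using~\eqref{defR}, rewrite the combined prefactor
\begin{align*}
f_{N-k}(\phi(z))^{\nu N}=p_1^{\nu N(N-k)}S(\phi(z))^{\nu N}\Bigl(1-\tfrac{R_{N-k}(\phi(z))}{S(\phi(z))}\Bigr)^{\nu N},
\end{align*}
where the exponent $\nu N$ again arises as $\nu(k+1)+\nu(N-k-1)$. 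The $p_1$-exponents then combine by exactly the same arithmetic as in part~(a) into $C_{k,N}$, and what remains on the right-hand side is precisely $\Psi_{k,N}(\phi(z))\phi_*(z)F(\phi(z))C_{k,N}S(\phi(z))^{\nu N}$, matching~\eqref{mainass}.

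The only real subtlety is that the uniformity claim of~\eqref{mainass} is in $N-k$ rather than in $N$; but this is automatic because every appearance of ``$N$'' in Lemma~\ref{hhh} and in the residual factor $R_{N-k}$ has been tracked as a function of $N-k$, which is precisely the number of factors in the product~\eqref{sum1}--\eqref{prog2}. Beyond the one-line arithmetic check of the $p_1$-exponent, the argument is essentially pure bookkeeping.
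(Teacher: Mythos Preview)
Your proof is correct and follows the same route as the paper: you derive the master identity~\eqref{pppp} by iterating~\eqref{it} and applying the Poincar\'e equation, then feed it into Lemma~\ref{hhh}(a) and~(b) for the two parts, combining the powers $f_{N-k}^{\nu(k+1)}$ and $f_{N-k}^{\nu(N-k-1)}$ into $f_{N-k}^{\nu N}$ and using $f_{N-k}\le p_1^{N-k}S$ (respectively $f_{N-k}=p_1^{N-k}(S-R_{N-k})$) exactly as the paper does. Your explicit verification of the $p_1$-exponent arithmetic is a welcome addition that the paper leaves implicit.
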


\begin{proof}
Using~\eqref{phik} and iterating it according to~\eqref{it} we obtain 
\begin{align*}
\phi_k(za^N)
=\phi_*(z)\phi(za^{N-k})^{\nu(k+1)}\prod_{n=1}^{N-k-1}h\big(\phi(za^n)\big).
\end{align*}
The Poincar\' e functional equation~\eqref{pfe} implies
\begin{align}
\label{pppp}
\phi_k(za^N)
=\phi_*(z)f_{N-k}^{\nu(k+1)}(\phi(z))\prod_{n=1}^{N-k-1}h\big(f_n(\phi(z))\big).
\end{align}

(a) Estimating the Laplace transform $\phi_*$ by one, 
the second term of~\eqref{pppp} by 
\begin{align*}
|f_{N-k}(\phi(z))|\le f_{N-k}(|\phi(z)|)\le f_{N-k}(s),
\end{align*}
and the product by Lemma~\ref{hhh} we have 
\begin{align*}
|\phi_k(za^N)|\le c\frac{ |\phi(z)|}{s}\,
q_{\nu}^{N-k-1}p_1^{-\frac{\nu (N-k-1)(N-k)}{2}}f_{N-k}^{\nu N}(s)
\end{align*} 
with some $c>0$. Taking into account
\begin{align*}
f_{N-k}(s)\le p_1^{N-k}S(s),
\end{align*}
which follows from~\eqref{fff} and~\eqref{infps} we obtain the required bound. 
\medskip

(b) Using the asymptotics for the product in~\eqref{pppp} obtained in Lemma~\ref{hhh} we get 
\begin{align*}
\phi_k(za^N)
&\sim\phi_*(z)F(\phi(z))
q_{\nu}^{N-k-1}p_1^{-\frac{\nu (N-k-1)(N-k)}{2}}f_{N-k}^{\nu N}(\phi(z))
\end{align*}
as $N-k\to\infty$ uniformly on $\{z: \phi(z)\in \mathcal{D}_{r,\theta}\}$. Taking into account 
\begin{align*}
f_{N-k}(\phi(z))=p_1^{N-k}S(\phi(z))-p_1^{N-k}R_{N-k}(\phi(z))
\end{align*}
and observing that $S(\phi(z))\neq 0$ on $\{z: \phi(z)\in \mathcal{D}_{r,\theta}\}$ by Lemma~\ref{ddd}
we arrive at~\eqref{mainass}.
\end{proof}
\bigskip


\section{Left tail of $\VV_k$}
\label{s_lef}
\medskip 

The aim of this section is to compute the left tail asymptotics for $\VV_k$ for two types of $k$. The first case is simply 
\begin{align}
\label{case1}
k=-1.
\end{align}
Combined with~\eqref{red}, this would give us the left tail asymptotics of $\WW$ and prove Theorem~\ref{main1}.
The second case is 
\begin{align}
\label{case2}
k\equiv k(\e,x)=\lfloor\gamma(\e)\rfloor+x
\end{align} 
for a fixed integer $x$. This is needed to prove Theorem~\ref{main2}. It turns out that both cases can be handled simultaneously 
so in this section we always assume that $k$ satisfies either~\eqref{case1} or~\eqref{case2}. 
\medskip

Let $\rho$
be the function defined implicitly in a right neighbourhood of zero by
\begin{align}
\label{rho}
\rho(\e)a^{-\rho(\e)}=\e.
\end{align}
It is easy to see that 
\begin{align}
\label{rhoomega}
\omega=\rho\log a.
\end{align}
and that the first three leading terms of the asymptotics of $\rho$ are given by  
\begin{align}
\label{fullass}
\rho(\e)=\frac{1}{\log a}\Big[\log(1/\e)+\log\log(1/\e)
-(1+o(1))\log\log a\Big].
\end{align}
It is worth mentioning that the definition~\eqref{gamma} of 
$\gamma$ manifested itself from the condition 
\begin{align}
\label{gammarho}
\rho(\e)\, p_1^{(\lambda-1)\left(\rho(\e)-\gamma(\e)\right)}\asymp 1 \qquad\text{ as }\e\downarrow 0,
\end{align}
which will prove to be crucial later on. 
\medskip

Now fix
\begin{align*}
N\equiv N(\e)=\lfloor \rho(\e)\rfloor.
\end{align*} 
and choose $u\equiv u(\e)$ in such a way that 
\begin{align}
\label{defu}
\nu\psi'(u)=-a^{-\{\rho\}}.
\end{align}
This is possible by Lemma~\ref{chooseu} since $\psi'$ takes all negative values.  Moreover, since the right hand side of~\eqref{defu}
is bounded between $-1$ and $-1/a$ and $\psi'$ is decreasing by Lemma~\ref{chooseu}, 
there exist 
positive and independent of $\e$ constants $u_*$ and $u^*$ such that $u\in [u_*, u^*]$ for all sufficiently small $\e$.
\medskip

Similarly to~\eqref{vv}, the lower tail of $\VV_k$ can be computed by the inversion formula 
\begin{align*}
\P\big\{\VV_k<\e\big\}=\frac{1}{2\pi}\int_{-\infty}^{\infty}\frac{1-e^{-i\tau \e }}{i\tau}\,
\phi_k(-i\tau)\, d\tau. 
\end{align*}
Similarly to~\eqref{loc8}, we will move the integration contour far to the right and rescale the integration accordingly. 
However, we have to do it more carefully. Namely, we replace the vertical coordinate axis by the vertical line passing 
 through $ua^N$ and substitute $\tau=ta^N$. This gives  
\begin{align}
\label{conour}
\P\big\{\VV_k<\e\big\}=\frac{1}{2\pi}\int_{-\infty}^{\infty}\frac{e^{\e a^N(u-i t)}-1}{u-i t}\,
\phi_k\big((u-i t)a^N\big)\, d t. 
\end{align}

In order to understand the impact of the term $\phi_k$ in the integral, we will combine Lemma~\ref{iter}
with the saddle  point approximation for $S$. 
Choose $r$ in such a way that $\phi(u_*)<r<1$ and choose $\theta$ according to Lemma~\ref{ddd}.  
The composition $S\circ \phi$ maps the interval $[u_*,u^*]$ to $[S(\phi(u^*)), S(\phi(u_*))]$
since $\phi$ is decreasing on $[0,\infty)$ and $S$ is increasing on $[0,1)$ by Lemma~\ref{nonzero}. 
 Hence we can choose $\beta_1>0$ small enough so that  $\phi$ maps 
$[u_*,u^*]\times [-\beta_1,\beta_1]$ to  $\mathcal{D}_{r,\theta}$
and $S\circ \phi$ maps 
$[u_*,u^*]\times [-\beta_1,\beta_1]$ to  $\{z\in\C:\mathcal{R}e\, z>0\}$. 
Now the function $\log (S\circ \phi)$ is well-defined on $[u_*,u^*]\times [-\beta_1,\beta_1]$
and its third order derivatives are bounded. 
Expanding into the Taylor series
\begin{align}
\label{taylor}
\log S(\phi(u-it))=\log S(\phi(u))-it\psi'(u)-\frac{t^2}{2}\psi''(u)+O(t^3)
\end{align}
as $t\to 0$ uniformly in $\e$ and using the fact that $\psi''$ is positive by Lemma~\ref{chooseu},
we choose $\beta\in (0,\beta_1)$ in such a way that  
\begin{align}
\label{taylorest}
\mathcal{R}e\big[\log S(\phi(u-it))\big]\le \log S(\phi(u))-\frac{t^2}{4}\psi''(u)
\end{align}
for all $t\in [-\beta,\beta]$ and all $\e$ small enough. Finally, we choose $\alpha\equiv\alpha(\e)$ 
so that $\alpha\downarrow 0$,  
\begin{align}
\label{alpha}
\alpha e^{N\alpha^3}\to 0\qquad\text{ and }\qquad \alpha^2 N/\log N\to\infty.
\end{align}
We assume that $\e$ is small enough so that $\alpha<\beta$.
In Lemmas~\ref{part1},~\ref{part2}, and~\ref{part3} below 
we will compute the main part of the integral~\eqref{conour}
coming from integrating over $[-\alpha,\alpha]$, and show 
that the integrals over the remaining parts 
$\{|t|\in [\alpha,\beta]\}$ and $\{|t|>\beta\}$ are negligible. 
\medskip

However, before turning our attention to the integral~\eqref{conour} we compute the asymptotics of $\Psi_{k,N}$, 
which plays a crucial r\^ ole for $\phi_k((u-it)a^N)$ according to Lemma~\ref{iter}. 

\begin{lemma} 
\label{psi}
Let $r\in (0,1)$ and let $\theta$ be chosen according to Lemma~\ref{ddd}. 
\smallskip

(a) $\Psi_{-1,N}(z)\sim 1$ as $\e\downarrow 0$ uniformly on $\mathcal{D}_{r,\theta}$.
\smallskip

(b) Let $k$ be of the form~\eqref{case2}.
Then there is  $w\equiv w(\e)$ bounded away from zero and infinity such that 
\begin{align*}
\Psi_{k, N}(z)\sim \exp\big\{-w\nu p_1^{-(\lambda-1)x}S(z)\big\}
\end{align*}
as $\e\downarrow 0$ uniformly on $\mathcal{D}_{r,\theta}$.
\end{lemma}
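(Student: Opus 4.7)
The plan is to take logarithms and use the asymptotic expansion $\log(1-\eps)=-\eps+O(\eps^2)$ for small $\eps$, with Lemma~\ref{second}(a) controlling $R_{N-k}(z)/S(z)$ uniformly on $\mathcal{D}_{r,\theta}$. Note first that on $\mathcal{D}_{r,\theta}$ we have $S(z)\neq 0$ by Lemma~\ref{ddd}(a), so $\Psi_{k,N}(z)$ is well defined, and Lemma~\ref{second}(a) gives
\begin{align*}
\frac{R_{N-k}(z)}{S(z)}\sim c_2 S(z)\,p_1^{(N-k)(\lambda-1)}
\end{align*}
uniformly on $\mathcal{D}_{r,\theta}$, whenever $N-k\to\infty$. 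The function $S$ is bounded on $\mathcal{D}_{r,\theta}$, so the right-hand side goes to zero uniformly, and we may write
\begin{align*}
\log\Psi_{k,N}(z)=-\nu N\,\frac{R_{N-k}(z)}{S(z)}+O\!\left(\nu N\Big(\frac{R_{N-k}(z)}{S(z)}\Big)^{2}\right).
\end{align*}

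For part (a), $k=-1$ so $N-k=N+1=\lfloor\rho(\e)\rfloor+1$. Then $N\to\infty$, and the leading term is asymptotic to $-\nu c_2 N\,S(z)\,p_1^{N(\lambda-1)}$, which tends to $0$ uniformly in $z\in\mathcal{D}_{r,\theta}$ since $p_1<1$ and $\lambda\ge 2$ force the exponential decay $p_1^{N(\lambda-1)}$ to beat the linear factor $N$. The quadratic error is of smaller order for the same reason. Hence $\log \Psi_{-1,N}(z)\to 0$ uniformly, which is exactly the claim.

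For part (b), set $N-k=\lfloor\rho\rfloor-\lfloor\gamma\rfloor-x$. I first check that $N-k\to\infty$: using~\eqref{rhoomega}--\eqref{fullass} together with the definition~\eqref{gamma} of $\gamma$, one finds $\rho-\gamma\sim-\tfrac{\log\log(1/\e)}{(\lambda-1)\log p_1}\to\infty$ as $\e\downarrow 0$, since $\log p_1<0$. Hence Lemma~\ref{second}(a) applies as above. The crux of the argument is the defining property~\eqref{gammarho} of $\gamma$, which precisely says that
\begin{align*}
\rho(\e)\,p_1^{(\lambda-1)(\rho(\e)-\gamma(\e))}\asymp 1,
\end{align*}
so after absorbing the bounded fractional parts $\{\rho\},\{\gamma\}$ and the shift $x$, the quantity
\begin{align*}
w(\e):=c_2\,\lfloor\rho\rfloor\,p_1^{(\lambda-1)(\lfloor\rho\rfloor-\lfloor\gamma\rfloor)}
\end{align*}
is bounded away from both zero and infinity. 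Then
\begin{align*}
-\nu N\,\frac{R_{N-k}(z)}{S(z)}\sim -\nu\,c_2\,\lfloor\rho\rfloor\,p_1^{(\lambda-1)(N-k)}\,S(z)
=-w(\e)\,\nu\,p_1^{-(\lambda-1)x}\,S(z),
\end{align*}
uniformly on $\mathcal{D}_{r,\theta}$. Finally, the quadratic error is of order $N\bigl(p_1^{(\lambda-1)(N-k)}\bigr)^2\asymp 1/N\to 0$, so it can be discarded. Exponentiating gives the stated asymptotics.

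The main obstacle, as should be visible above, is purely bookkeeping: one must carefully relate $N-k$ to $\rho-\gamma$ through floor-function discrepancies in order to invoke~\eqref{gammarho} and isolate the factor $p_1^{-(\lambda-1)x}$ while keeping the remaining prefactor $w(\e)$ bounded. Uniformity on $\mathcal{D}_{r,\theta}$ is then automatic because Lemma~\ref{second}(a) is uniform on $\mathcal{D}_r\cap\mathcal{B}\supseteq\mathcal{D}_{r,\theta}$ and $S$ is bounded there.
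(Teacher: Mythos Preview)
Your proof is correct and follows essentially the same route as the paper: take logarithms, apply Lemma~\ref{second}(a) to replace $R_{N-k}(z)/S(z)$ by $c_2 S(z)p_1^{(\lambda-1)(N-k)}$ uniformly on $\mathcal{D}_{r,\theta}$, then in case~(a) use $N p_1^{(\lambda-1)(N+1)}\to 0$, and in case~(b) invoke~\eqref{gammarho} to see that the prefactor $w(\e)$ is bounded away from zero and infinity after extracting $p_1^{-(\lambda-1)x}$. Your treatment is in fact slightly more explicit than the paper's about the quadratic remainder in $\log(1-\eps)$, and your packaging of $w(\e)$ via the floored quantities is equivalent to the paper's $w(\e)=c_1c_2\,p_1^{(\lambda-1)(\{\gamma\}-\{\rho\})}$.
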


\begin{proof} 
Comparing the definition of $\gamma$ in~\eqref{gamma} and the asymptotics~\eqref{fullass} of $\rho$
it is easy to see that $N-k\to\infty$ as $\e\downarrow 0$ in both cases. Observe that $S(z)\neq 0$ on $\mathcal{D}_{r,\theta}$
by Lemma~\ref{ddd}. Lemma~\ref{second} now implies that 
\begin{align*}
\frac{R_{N-k}(z)}{S(z)}\sim c_1\, S(z)p_1^{(\lambda-1)(N-k)}
\end{align*}
with some $c_1>0$ and hence 
\begin{align*}
\log \Psi_{k,N}\sim -c_1\nu N  S(z) p_1^{(\lambda-1)(N-k)}
\end{align*}
as $\e\downarrow 0$ uniformly on $\mathcal{D}_{r,\theta}$
\medskip

(a) If $k=-1$ then we use $N\sim \frac{\log(1/\e)}{\log a}$  to obtain  
\begin{align*}
\lim_{\e\downarrow 0}N p_1^{(\lambda-1)(N+1)}=0. 
\end{align*}

(b) For $k=\gamma-\{\gamma\}+x$ we get 
\begin{align*}
N p_1^{(\lambda-1)(N-k)}\sim p_1^{-x(\lambda-1)} p_1^{(\lambda-1)(\{\gamma\}-\{\rho\})}\rho p_1^{(\lambda-1)(\rho-\gamma)}.
\end{align*}
Since $\rho p_1^{(\lambda-1)(\rho-\gamma)}\sim c_2$ according to~\eqref{gamma} and the three leading terms of $\rho$
given by~\eqref{fullass},
we obtain 
the required asymptotics with  $w\equiv w(\e)=c_1c_2p_1^{(\lambda-1)(\{\gamma\}-\{\rho\})}$. 
\end{proof}


\begin{lemma} 
\label{part1}
Suppose that $k$ is of the form~\eqref{case1} or~\eqref{case2}. Then 
\begin{align*}
&\frac{1}{2\pi}\int_{-\alpha}^{\alpha}\frac{e^{\e a^N(u-i t)}}{u-i t}\,
\phi_k\big((u-i t)a^N\big)\, d t\\
&\sim \Phi_{k,N} C_{k,N}
\exp\Big\{u\e a^N+\nu N\log S(\phi(u))-\frac 1 2 \log N\Big\}
\end{align*}
as $\e\downarrow 0$, where
\begin{align*}
\Phi_{k,N}\equiv \Phi_{k,N}(\e)=
\frac{\phi_*(u)F(\phi(u))}{u\sqrt{2\pi \nu\psi''(u)}}\Psi_{k,N}(\phi(u)).
\end{align*}
\end{lemma}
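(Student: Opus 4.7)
The plan is to treat the integral by a Laplace-type saddle-point method: Lemma~\ref{iter}(b) brings in the large factor $S(\phi(u-it))^{\nu N}$, and $u$ has been calibrated via~\eqref{defu} so that a Gaussian expansion around $t=0$ produces the $N^{-1/2}$ factor responsible for the $-\tfrac12\log N$ term. For $|t|\le\alpha<\beta<\beta_1$ the point $\phi(u-it)$ lies in $\mathcal{D}_{r,\theta}$ by the choice of $\beta_1$, and $N-k\to\infty$ in both cases~\eqref{case1} and~\eqref{case2} by~\eqref{gamma} and~\eqref{fullass}. Hence Lemma~\ref{iter}(b) yields, uniformly in $|t|\le\alpha$,
\begin{align*}
\phi_k((u-it)a^N)\sim C_{k,N}\,\Psi_{k,N}(\phi(u-it))\,\phi_*(u-it)\,F(\phi(u-it))\,S(\phi(u-it))^{\nu N}.
\end{align*}

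I split the integrand into slow and fast factors. The slow factors $\Psi_{k,N}(\phi(u-it))$, $\phi_*(u-it)$, $F(\phi(u-it))$, and $1/(u-it)$ are each $(1+o(1))$ times their value at $t=0$ uniformly in $|t|\le\alpha$: analyticity handles $\phi_*$, $F$, $1/(u-it)$, and Lemma~\ref{psi} yields $|\log\Psi_{k,N}(\phi(u-it))-\log\Psi_{k,N}(\phi(u))|=O(|t|)$ uniformly in $\e$. For the fast factor $G(t):=\e a^N(u-it)+\nu N\log S(\phi(u-it))$, the expansion~\eqref{taylor} gives
\begin{align*}
G(t)=G(0)-it\bigl[\e a^N+\nu N\psi'(u)\bigr]-\tfrac{t^2}{2}\nu N\psi''(u)+O(Nt^3).
\end{align*}
From~\eqref{rho} and $N=\lfloor\rho\rfloor$ one has $\e a^N=\rho\,a^{-\{\rho\}}$, while~\eqref{defu} gives $\nu N\psi'(u)=-(\rho-\{\rho\})a^{-\{\rho\}}$, so the bracketed linear coefficient collapses to the bounded quantity $\{\rho\}a^{-\{\rho\}}$.

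Combining these reductions, the integral equals, up to a $(1+o(1))$ factor and the cubic remainder,
\begin{align*}
H(0)\,e^{G(0)}\int_{-\alpha}^\alpha e^{-it\{\rho\}a^{-\{\rho\}}}\,e^{-\frac{t^2}{2}\nu N\psi''(u)}\,dt,
\end{align*}
where $H(0):=\Psi_{k,N}(\phi(u))\phi_*(u)F(\phi(u))/u$. Since $\alpha^2 N/\log N\to\infty$, extending the range to $\R$ costs at most $e^{-cN\alpha^2}$ which is super-polynomially small, and the standard complex Gaussian yields $\sqrt{2\pi/(\nu N\psi''(u))}\exp\{-(\{\rho\}a^{-\{\rho\}})^2/(2\nu N\psi''(u))\}$; the exponential tends to $1$ as the exponent is $O(1/N)$. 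A short algebraic identification shows $H(0)\sqrt{2\pi/(\nu N\psi''(u))}/(2\pi)=\Phi_{k,N}N^{-1/2}$, which combined with $G(0)=u\e a^N+\nu N\log S(\phi(u))$ gives the asymptotic in the statement.

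The main technical point is the cubic remainder $O(Nt^3)$, since~\eqref{alpha} only forces $\alpha e^{N\alpha^3}\to 0$ and so permits $N\alpha^3$ to grow. I would handle this by splitting the integration at an intermediate scale $\delta$ with $N^{-1/2}\ll\delta\ll N^{-1/3}$ (for instance $\delta=N^{-1/2}\log N$): on $|t|\le\delta$ one has $N|t|^3=o(1)$, so $e^{O(Nt^3)}=1+o(1)$ and the preceding Gaussian analysis applies directly; on $\delta<|t|\le\alpha$ the Gaussian factor is bounded by $e^{-cN\delta^2}$, super-polynomially small in $N$, and combined with the crude estimate $e^{O(N\alpha^3)}\le 1/\alpha$ together with the lower bound $\alpha\gg(\log N/N)^{1/2}$, this part contributes $o(N^{-1/2})$ and is negligible against the Gaussian main term.
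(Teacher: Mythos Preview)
Your proposal is correct and follows essentially the same saddle-point route as the paper: invoke Lemma~\ref{iter}(b), freeze the slowly varying factors $\Psi_{k,N}(\phi(\cdot))$, $\phi_*$, $F$, and $1/(u-it)$ at $t=0$ (using Lemma~\ref{psi} for the first of these), Taylor-expand $\nu N\log S(\phi(u-it))$ via~\eqref{taylor}, check via~\eqref{rho} and~\eqref{defu} that the linear coefficient collapses to the bounded quantity $\{\rho\}a^{-\{\rho\}}$, and reduce to a Gaussian integral whose value is $\sqrt{2\pi/(\nu N\psi''(u))}$.

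The only visible difference is your handling of the cubic Taylor remainder. The paper disposes of it in one line using~\eqref{alpha} directly: with $|\eta(t)|\le cN|t|^3$ one has $|e^{\eta}-1|\le |\eta|e^{|\eta|}\le cN\alpha\, t^2\, e^{cN\alpha^3}$, and since $\int_\R t^2 e^{-c' t^2 N}\,dt=O(N^{-3/2})$ the error in the integral is $O\bigl(\alpha e^{cN\alpha^3}\cdot N^{-1/2}\bigr)$, which is $o(N^{-1/2})$ precisely by the first condition in~\eqref{alpha}. Your intermediate-scale splitting at $\delta\sim N^{-1/2}\log N$ is equally valid and arguably more robust to the implicit constant $c$ in the $O(Nt^3)$, but it is not strictly needed: the condition $\alpha e^{N\alpha^3}\to 0$ was put there for exactly this purpose.
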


\begin{proof} By our choice of 
$\beta$ we have
$\phi(u-it)\in \mathcal{D}_{r,\theta}$
for all $t\in [-\beta,\beta]$. 
By Lemma~\ref{iter} we obtain
\begin{align}
\label{glo}
&\phi_k((u-it)a^N)
\sim
\Psi_{k,N}(\phi(u-it))\phi_*(u-it)F(\phi(u-it))  C_{k,N}
S(\phi(u-it))^{\nu N}
\end{align}
as $\e\downarrow 0$ uniformly for all $t\in [-\beta,\beta]$. Taking into account that  $\alpha\downarrow 0$ we have 
\begin{align*}
&\phi_k((u-it)a^N)\sim
\Psi_{k,N}(\phi(u))\phi_*(u)F(\phi(u))  C_{k,N}
S(\phi(u-it))^{\nu N}
\end{align*}
as $\e\downarrow 0$ for all $t\in [-\alpha,\alpha]$ since $\Psi_{k,N}$ is regular enough by Lemma~\ref{psi}. 
We obtain 
\begin{align}
\label{loc2}
\int_{-\alpha}^{\alpha}&\frac{e^{\e a^N(u-i t)}}{u-i t}\,
\phi_k\big((u-i t)a^N\big)\, d t\notag\\
&\sim C_{k,N}\Psi_{k,N}(\phi(u))
\frac{\phi_*(u)F(\phi(u))}{u}
\int_{-\alpha}^{\alpha}e^{\e a^N(u-i t)}
S(\phi(u-it))^{\nu N} \, d t.
\end{align}
For the integral above, we use the Taylor expansion~\eqref{taylor} 
to obtain
\begin{align*}
&\int_{-\alpha}^{\alpha}e^{\e a^N(u-i t)}S(\phi(u-it))^{\nu N} \, d t\\
&\sim \exp\big\{u\e a^N+\nu N\log S(\phi(u))\big\}\int_{-\alpha}^{\alpha}
\exp\Big\{-it\big(\e a^{N}+\nu N\psi'(u)\big)-\frac{t^2 \nu N \psi''(u)}{2}\Big\}\, dt,
\end{align*}
where we have also used $\alpha e^{N\alpha^3}\to 0$ from~\eqref{alpha} to get rid of the negligible terms in the Taylor expansion. 
Observing that by~\eqref{rho} and~\eqref{defu}
\begin{align*}
\e a^{N}+\nu N\psi'(u)
&=\e a^{\rho-\{\rho\}}+\nu \rho \psi'(u)-\nu\{\rho\}\psi'(u)\\
&=\rho\big(a^{-\{\rho\}}+\nu\psi'(u)\big)-\nu\{\rho\}\psi'(u)=-\nu\{\rho\}\psi'(u)
\end{align*}
we obtain 
\begin{align}
&\int_{-\alpha}^{\alpha}e^{\e a^N(u-i t)}S(\phi(u-it))^{\nu N} \, d t \notag\\
&\sim \exp\big\{u\e a^N+\nu N\log S(\phi(u))\big\}\int_{-\alpha}^{\alpha}
\exp\Big\{it\nu\{\rho\}\psi'(u)-\frac{t^2 \nu N \psi''(u)}{2}\Big\}\, dt.
\label{loc1}
\end{align}
Substituting $\tau=t\sqrt{\nu N \psi''(u)}$ we get 
\begin{align*}
&\int_{-\alpha}^{\alpha}
\exp\Big\{it\nu\{\rho\}\psi'(u)-\frac{t^2 \nu N \psi''(u)}{2}\Big\}\, dt\\
&\sim \frac{1}{\sqrt{\nu N \psi''(u)}} 
\int_{-\alpha\sqrt{\nu N \psi''(u)}}^{\alpha\sqrt{\nu N \psi''(u)}}
\exp\Big\{\frac{i\tau\nu\{\rho\}\psi'(u)}{\sqrt{\nu N \psi''(u)}}-\frac{\tau^2 }{2}\Big\}\, d\tau
\sim \frac{\sqrt{2\pi}}{\sqrt{\nu N \psi''(u)}}
\end{align*}
since, as $\e\downarrow 0$, the interval of integration increases to $\R$ by~\eqref{alpha}
and the first order term tends to zero (see for example~\cite[Lemma 11]{bgms}).
Combining this with~\eqref{loc1} we obtain 
\begin{align*}
&\int_{-\alpha}^{\alpha}e^{\e a^N(u-i t)}S(\phi(u-it))^{\nu N} \, d t \\
&\sim  \frac{\sqrt{2\pi}}{\sqrt{\nu\psi''(u)}}\exp\Big\{u\e a^N+\nu N\log S(\phi(u))-\frac 1 2 \log N\Big\}.
\end{align*}
Together with~\eqref{loc2} this proves the required asymptotics. 
\end{proof}

\begin{lemma} 
\label{part2}
Suppose that $k$ is of the form~\eqref{case1} or~\eqref{case2}. Then 
\begin{align*}
\Big|\int_{|t|\in [\alpha,\beta]}&\frac{e^{\e a^N(u-i t)}}{u-i t}\,
\phi_k\big((u-i t)a^N\big)\, d t\Big|\\
&\le C_{k,N}
\exp\Big\{u\e a^N+\nu N\log S(\phi(u))-\log N\Big\},
\end{align*}
for all $\e$ small enough.
\end{lemma}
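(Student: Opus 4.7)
The plan is to reuse the decomposition from the proof of Lemma~\ref{part1} and exhibit Gaussian decay in $t$ that makes the contribution from the intermediate range $\{|t|\in[\alpha,\beta]\}$ negligible compared with $N^{-1}$. Since $\beta$ was chosen so that $\phi(u-it)\in\mathcal{D}_{r,\theta}$ for all $|t|\le\beta$, the asymptotic~\eqref{mainass} of Lemma~\ref{iter}(b) applies on this whole range. Using it, for all $\e$ small enough, I would bound
$$|\phi_k((u-it)a^N)|\le 2\,|\Psi_{k,N}(\phi(u-it))|\,|\phi_*(u-it)|\,|F(\phi(u-it))|\,C_{k,N}\,|S(\phi(u-it))|^{\nu N}.$$

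I would then check that the three non-$S$ factors, together with $|1/(u-it)|$ and $|e^{\e a^N(u-it)}|=e^{u\e a^N}$, contribute only a bounded multiplicative constant, uniform in $t$ and $\e$. Indeed $|\phi_*(u-it)|\le\phi_*(u)\le 1$ because $\WW\ge 0$ and $\mathcal{R}e\,(u-it)=u>0$; $F$ is bounded on $\mathcal{D}_{r,\theta}$ by Lemma~\ref{hhh}(b); $|1/(u-it)|\le 1/u_*$; and $\Psi_{k,N}$ is uniformly bounded by Lemma~\ref{psi} in both cases~\eqref{case1} and~\eqref{case2}. In the second regime this uses the fact that, by the choice of $\beta_1$, $\mathcal{R}e\, S(\phi(u-it))>0$ for $|t|\le\beta$, so that $|\exp(-w\nu p_1^{-(\lambda-1)x}S(\phi(u-it)))|\le 1$. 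For the $S$-factor I would invoke the saddle-point estimate~\eqref{taylorest} to obtain
$$|S(\phi(u-it))|^{\nu N}=\exp\!\big(\nu N\,\mathcal{R}e\,\log S(\phi(u-it))\big)\le S(\phi(u))^{\nu N}\exp\!\Big(-\frac{\nu N t^2\psi''(u)}{4}\Big).$$

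Putting these together, the modulus of the integrand is dominated by a constant multiple of $C_{k,N}\,\exp(u\e a^N+\nu N\log S(\phi(u)))\,\exp(-\nu N t^2\psi''(u)/4)$, so it only remains to integrate the Gaussian tail. This is where condition~\eqref{alpha} enters: since $u\in[u_*,u^*]$ and $\psi''$ is continuous and strictly positive by Lemma~\ref{chooseu}, there exists $c>0$ with $\psi''(u)\ge c$ for all small~$\e$, and a standard Gaussian tail estimate gives
$$\int_{\alpha}^{\beta} e^{-\nu N t^2\psi''(u)/4}\,dt\le\frac{C'}{N\alpha}\exp\!\Big(-\frac{\nu c N\alpha^2}{4}\Big).$$
The second requirement in~\eqref{alpha}, namely $\alpha^2 N/\log N\to\infty$, forces $\nu c N\alpha^2/4\ge 2\log N$ for $\e$ small enough, so the right-hand side is bounded by $e^{-\log N}$ once the pre-exponential factors are absorbed, yielding the claimed estimate.

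The only real obstacle is the uniform bookkeeping of all the auxiliary factors, in particular verifying that $\Psi_{k,N}$ stays uniformly bounded on the full window $|t|\le\beta$ in both regimes of $k$; the mechanism itself is a routine saddle-point tail argument that matches the width $\beta$ of the Taylor region against the scale $\alpha$ dictated by~\eqref{alpha}.
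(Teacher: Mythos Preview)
Your proposal is correct and follows essentially the same route as the paper: apply the asymptotic~\eqref{mainass} on $|t|\le\beta$, bound $|\phi_*|$, $F$, $1/(u-it)$, and $\Psi_{k,N}$ uniformly (the last via Lemma~\ref{psi}), then use the Taylor estimate~\eqref{taylorest} to extract Gaussian decay and exploit $\alpha^2 N/\log N\to\infty$ from~\eqref{alpha}. The only cosmetic difference is that the paper bounds the remaining integral by the crude estimate $\int_{|t|\in[\alpha,\beta]}e^{-\nu N t^2\psi''(u)/4}\,dt\le 2\beta\,e^{-\nu N\alpha^2\psi''(u)/4}$ rather than your sharper Gaussian tail bound; both reach the conclusion via~\eqref{alpha} in the same way.
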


\begin{proof} 
Observe that~\eqref{glo} is in particular true for all $t$ such that $|t|\in [\alpha,\beta]$. 
Since $|u-it|\ge u_*$, $|\phi_*(u-it)|\le 1$, 
$F$ is bounded by Lemma~\ref{hhh}, 
and $\Psi_{k,N}$ is uniformly bounded by Lemma~\ref{psi}, there is a positive constant $c$ such that 
\begin{align}
\Big|\int_{|t|\in [\alpha,\beta]}&\frac{e^{\e a^N(u-i t)}}{u-i t}\,
\phi_k\big((u-i t)a^N\big)\, d t\Big|\notag\\
&\le  c\, C_{k,N}
\Big|\int_{|t|\in [\alpha,\beta]}e^{\e a^N(u-i t)}
S(\phi(u-it))^{\nu N} \, d t\Big|.
\label{loc3}
\end{align}
For the integral above, we use the Taylor bound~\eqref{taylorest}
to obtain
\begin{align}
&\Big|\int_{|t|\in [\alpha,\beta]}e^{\e a^N(u-i t)}S(\phi(u-it))^{\nu N} \, d t\Big|\notag\\
&\le \exp\big\{u\e a^N+\nu N\log S(\phi(u))\big\}\int_{|t|\in [\alpha,\beta]}
\exp\Big\{-\frac{t^2 \nu N \psi''(u)}{4}\Big\}\, dt\\
&\le 2\beta \exp\Big\{u\e a^N+\nu N\log S(\phi(u))-\frac{\alpha^2 \nu N \psi''(u)}{4}\Big\}.
\label{loc4}
\end{align}
Taking into account the fact that $\alpha^2 N/\log N\to \infty$ according to~\eqref{alpha} and 
combining~\eqref{loc3} with~\eqref{loc4} we obtain the desired estimate. 
\end{proof}

\begin{lemma} 
\label{part3}
Suppose that $k$ is of the form~\eqref{case1} or~\eqref{case2}.
There exists $\delta>0$ such that
\begin{align}
\Big|\int_{|t|\ge \beta}&\frac{e^{\e a^N(u-i t)}}{u-i t}\,
\phi_k\big((u-i t)a^N\big)\, d t\Big|\notag\\
&\le C_{k,N}\exp\Big\{u\e a^N+\nu N\log S(\phi(u))-\delta\nu N\Big\}
\label{int3}
\end{align}
and 
\begin{align}
\Big|\int_{-\infty}^{\infty}&\frac{1}{u-i t}\,
\phi_k\big((u-i t)a^N\big)\, d t\Big|\notag\\
&\le C_{k,N}
\exp\Big\{\nu N\log S(\phi(u))-\delta\nu N\Big\}\phantom{u\e a^N+}
\label{int4}
\end{align}
for all $\e$ small enough.
\end{lemma}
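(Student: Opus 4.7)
The plan is to reduce both bounds to Lemma~\ref{iter}(a) combined with strict monotonicity of $\log S$ (Lemma~\ref{logS}), using two different devices to replace $\phi(u)$ by a strictly smaller value and thereby manufacture the $e^{-\delta\nu N}$ saving. For~\eqref{int3} I exploit that $|\phi(u-it)|$ is strictly less than $\phi(u)$ whenever $|t|\ge \beta$, so Lemma~\ref{iter}(a) can be invoked with parameter $s$ strictly below $\phi(u)$. For~\eqref{int4}, where the integration now includes $|t|<\beta$, this strategy fails; instead I would deform the contour to a vertical line $\mathcal{R}e\, z=u' a^N$ with $u'>u^*$, pushing the entire integrand into a region where $|\phi(u'-it)|\le \phi(u')<\phi(u_*)$ automatically.

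For~\eqref{int3}, first produce $r_1\in(0,\phi(u_*))$ such that $|\phi(u-it)|\le r_1$ for every $u\in[u_*,u^*]$ and every $|t|\ge \beta$ (uniformly in small $\e$). Pick $T>\beta$ so that a uniform-in-$u$ Riemann--Lebesgue estimate (applied to the family $\{e^{-uw}\,\mu_W(dw)\}_{u\in[u_*,u^*]}$, which is equicontinuous in $L^1$) yields $|\phi(u-it)|\le \phi(u_*)/2$ for $|t|\ge T$; on the compact slab $[u_*,u^*]\times\{\beta\le|t|\le T\}$ continuity together with the strict maximum $|\phi(u-it)|<\phi(u)$ at $t=0$ supplies a uniform separation. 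Lemma~\ref{iter}(a) with $s=r_1$ then gives
\[ |\phi_k((u-it)a^N)|\le c\,|\phi(u-it)|\,C_{k,N}\,S(r_1)^{\nu N}/r_1. \]
Combining this with $|e^{\e a^N(u-it)}|=e^{u\e a^N}$, $|u-it|\ge u_*$, and uniform integrability of $|\phi(u-it)|$ (the analogue of~\cite[Lemma~16]{FW09}), the left-hand side of~\eqref{int3} is bounded by $C\,e^{u\e a^N}\,C_{k,N}\,S(r_1)^{\nu N}$. Since $r_1<\phi(u_*)\le\phi(u)$ and $\log S$ is strictly increasing, $\log(S(r_1)/S(\phi(u)))\le -\delta_1$ for some $\delta_1>0$ uniform in $u\in[u_*,u^*]$, giving~\eqref{int3}.

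For~\eqref{int4}, the function $z\mapsto \phi_k(z)/z$ is holomorphic on $\{\mathcal{R}e\, z>0\}$. Fix any $u'>u^*$ (e.g.\ $u'=u^*+1$) and close a rectangular contour with vertical sides at $\mathcal{R}e\, z=ua^N$ and $\mathcal{R}e\, z=u'a^N$ and horizontal sides at heights $\pm iT$. Since $|\phi_k|\le 1$ on the right half-plane and $|z|\ge T$ on horizontal sides, their contribution is $O(1/T)\to 0$ as $T\to\infty$, so Cauchy's theorem yields
\[ \int_{-\infty}^{\infty}\frac{\phi_k((u-it)a^N)}{u-it}\,dt \;=\; \int_{-\infty}^{\infty}\frac{\phi_k((u'-it)a^N)}{u'-it}\,dt. \]
Now apply Lemma~\ref{iter}(a) to the right-hand integrand with $s=\phi(u')\le \phi(u_*)$ (admissible since $|\phi(u'-it)|\le\phi(u')$ for every $t$): one obtains $|\phi_k((u'-it)a^N)|\le c\,|\phi(u'-it)|\,C_{k,N}\,S(\phi(u'))^{\nu N}/\phi(u')$, and integrating against $dt/|u'-it|$ using uniform integrability of $|\phi(u'-it)|$ bounds the shifted integral by $C\,C_{k,N}\,S(\phi(u'))^{\nu N}$. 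Since $\phi(u')<\phi(u_*)\le\phi(u)$, strict monotonicity of $\log S$ produces $\log(S(\phi(u'))/S(\phi(u)))\le -\delta_2$ uniformly, and setting $\delta=\min(\delta_1,\delta_2)$ concludes both bounds.

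The main technical obstacle is the uniform-in-$u$ separation required for~\eqref{int3}: since $\{u\in[u_*,u^*],\,|t|\ge \beta\}$ is unbounded in $t$, extracting a single dominating constant $r_1<\phi(u_*)$ requires combining compactness on $\{|t|\in[\beta,T]\}$ with a uniform Riemann--Lebesgue type decay as $|t|\to\infty$. The contour-shift argument for~\eqref{int4} elegantly sidesteps any such region-by-region analysis, which is why it must be used there in place of the direct estimate.
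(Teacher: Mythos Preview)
Your argument is correct. For \eqref{int3} it is essentially the paper's proof: both establish a uniform separation of $|\phi(u-it)|$ from $\phi(u)$ over $u\in[u_*,u^*]$, $|t|\ge\beta$, feed it into Lemma~\ref{iter}(a), and convert the gap into the factor $e^{-\delta\nu N}$ via monotonicity of $\log S$. The paper phrases the separation as $|\phi(u-it)|\le(1-\eta)\phi(u)$, arguing (as in~\cite[Lemma~16]{FW09}) that $t\mapsto\phi(v-it)/\phi(v)$ is the characteristic function of an absolutely continuous law (the Cram\'er transform), and then uses Lemma~\ref{logS} with the mean value theorem to get the explicit inequality $\log S((1-\eta)\phi(u))\le\log S(\phi(u))-\eta$. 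Your Riemann--Lebesgue plus compactness argument produces a fixed $r_1<\phi(u_*)$ instead of the $u$-dependent $(1-\eta)\phi(u)$, but the structure is identical.

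For \eqref{int4} you take a genuinely different route. The paper simply asserts that \eqref{int4} ``is obtained in the same way as~\eqref{int3}'' after dropping the factor $e^{\e a^N(u-it)}$; read literally this only treats $|t|\ge\beta$, since the Cram\'er-transform separation is unavailable near $t=0$. Your contour shift to $\mathcal{R}e\,\zeta=u'>u^*$ (legitimate because $|\phi_k|\le 1$ makes the horizontal contributions $O(1/T)$) places \emph{all} of $|\phi(u'-it)|$ below the fixed value $\phi(u')<\phi(u_*)\le\phi(u)$, so Lemma~\ref{iter}(a) applies over the whole line with $s=\phi(u')$ and delivers the $-\delta\nu N$ saving cleanly. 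This is a tidy way to obtain \eqref{int4} as stated; it is worth noting, however, that for the application in Proposition~\ref{prop} the extra $-\delta\nu N$ in \eqref{int4} is not strictly needed, because the missing factor $e^{u\e a^N}$ is itself of order $e^{cN}$ and already renders the fourth integral negligible against the main term from Lemma~\ref{part1}.
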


\begin{proof}
Similarly to the proof of~\cite[Lemma 16]{FW09}, we use the fact that, for each
$v\in [u_*,u^*]$, $t\mapsto \phi(v-it)/\phi(v)$ is the characteristic function of some absolutely continuous law
(Cram\' er transform), the continuity of the mapping $(v,t)\mapsto \phi(v-it)/\phi(v)$, and the compactness of
$[u_*,u^*]$ to conclude that there is a constant $\eta$ such that
\begin{align}
\label{mon}
|\phi(u-it)|\le (1-\eta)\phi(u)\qquad \text{ for all }|t|\ge \beta
\end{align}
and all $\e$ small enough. Using Lemma~\ref{iter} with $s\equiv s(\e)=(1-\eta)\phi(u)<\phi(u_*)<r$ and 
taking into account that 
$s\ge (1-\eta)\phi(u^*)$ and so is separated from zero,  we obtain, 
with some positive constant $c_1$, 
\begin{align*}
|\phi_k((u-it)a^N)|
&\le c_1|\phi(u-it)|C_{k,N}
S\big((1-\eta)\phi(u)\big)^{\nu N}.
\end{align*}
By Lemma~\ref{logS} and the mean value theorem
\begin{align*}
\log S\big((1-\eta)\phi(u)\big)\le \log S\big(\phi(u)\big)-\eta,  
\end{align*}
which implies 
\begin{align*}
|\phi_k((u-it)a^N)|
&\le c_1|\phi(u-it)|
C_{k,N}
S\big(\phi(u)\big)^{\nu N}e^{-\eta \nu N}.
\end{align*}
Substituting this estimate into the integral~\eqref{int3} and using $|u-it|\ge u_*$ we obtain 
\begin{align}
&\Big|\int_{|t|\ge \beta}\frac{e^{\e a^N(u-i t)}}{u-i t}\,
\phi_k\big((u-i t)a^N\big)\, d t\Big|\notag\\
&\le c_2\, C_{k,N}
\exp\Big\{u\e a^N+\nu N\log S(\phi(u))-\eta\nu N \Big\}\int_{-\infty}^{\infty}|\phi(u-it)|dt
\label{loc6}
\end{align}
with some $c_2>0$.
It was shown in~\cite[Lemma 16]{FW09} that the integral above is uniformly bounded. This implies the
required estimate with some $\delta<\eta$. 
\smallskip

The estimate~\eqref{int4} is obtained in the same way as~\eqref{int3} with the only difference that the terms
$e^{\e a^N(u-i t)}$ and $u\e a^N$ are omitted. 
\end{proof}
\bigskip


\section{Proofs of the main theorems}
\label{s_pro}
\medskip

In this section we establish the joint probability~\eqref{red} and then prove Theorems~\ref{main1} and~\ref{main2}. 

\begin{prop} 
\label{prop}
Suppose that $k$ is of the form~\eqref{case1} or~\eqref{case2}. 
As $\e\downarrow 0$,
\begin{align}
\label{prop1}
\P\{\mathcal{K}>k, \mathcal{W}<\e\}\sim 
q_{\nu}^{N}p_1^{\frac{\nu N(N+1)}{2}}
\Phi_{k,N}\exp\Big\{u\e a^N+\nu N\log S(\phi(u))-\frac 1 2 \log N\Big\}.
\end{align} 
\end{prop}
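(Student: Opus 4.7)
The plan is to combine the identity \eqref{red} with a careful analysis of the Laplace inversion integral \eqref{conour}. By \eqref{red}, it suffices to show
\begin{align*}
\P\{\VV_k<\e\}\sim C_{k,N}\Phi_{k,N}\exp\Big\{u\e a^N+\nu N\log S(\phi(u))-\tfrac{1}{2}\log N\Big\},
\end{align*}
since multiplying by $q_\nu^{k+1}p_1^{\nu k(k+1)/2}$ and absorbing $C_{k,N}$ produces the claimed prefactor $q_\nu^N p_1^{\nu N(N+1)/2}$. The starting point is the identity
\begin{align*}
\P\{\VV_k<\e\}=\frac{1}{2\pi}\int_{-\infty}^{\infty}\frac{e^{\e a^N(u-it)}}{u-it}\phi_k\big((u-it)a^N\big)\,dt-\frac{1}{2\pi}\int_{-\infty}^{\infty}\frac{\phi_k\big((u-it)a^N\big)}{u-it}\,dt,
\end{align*}
obtained from \eqref{conour} by splitting the numerator $e^{\e a^N(u-it)}-1$.

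The first integral will be decomposed into the three regions $|t|\le\alpha$, $\alpha<|t|\le\beta$, $|t|>\beta$. Lemma~\ref{part1} identifies the contribution from $|t|\le\alpha$ as precisely the claimed main term $C_{k,N}\Phi_{k,N}\exp\{u\e a^N+\nu N\log S(\phi(u))-\tfrac12\log N\}$, obtained via the saddle-point choice \eqref{defu} of $u$ that cancels the linear term in $t$. Lemma~\ref{part2} bounds the middle-$|t|$ contribution by $C_{k,N}\exp\{u\e a^N+\nu N\log S(\phi(u))-\log N\}$, and Lemma~\ref{part3} bounds both the tail of the first integral and the entirety of the second integral by $C_{k,N}\exp\{u\e a^N+\nu N\log S(\phi(u))-\delta\nu N\}$ and $C_{k,N}\exp\{\nu N\log S(\phi(u))-\delta\nu N\}$ respectively.

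To show these error bounds are negligible relative to the main term, I will form the ratios error/main. The middle-$|t|$ and far-tail bounds from Lemmas~\ref{part2} and \ref{part3}\eqref{int3} share the factor $\exp\{u\e a^N+\nu N\log S(\phi(u))\}$ with the main term, so the ratios collapse to $\Phi_{k,N}^{-1}\cdot N^{-1/2}$ and $\Phi_{k,N}^{-1}\exp\{\tfrac12\log N-\delta\nu N\}$, both of which vanish provided $\Phi_{k,N}$ stays bounded away from zero. For the second (numerator-$1$) integral bounded by \eqref{int4}, the ratio is $\Phi_{k,N}^{-1}\exp\{-u\e a^N-\delta\nu N+\tfrac12\log N\}$, and the observation that $u\e a^N=u\rho a^{-\{\rho\}}\to\infty$ makes this decay even faster.

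The main obstacle is therefore the verification that $\Phi_{k,N}$ is bounded away from zero as $\e\downarrow 0$. This reduces to checking that $\phi_*(u)>0$, $F(\phi(u))\neq 0$, $\psi''(u)>0$, and $\Psi_{k,N}(\phi(u))$ is bounded away from zero: positivity of $\phi_*$ is immediate from \eqref{phistar}, nonvanishing of $F$ on $\mathcal{D}_{r,\theta}$ is part of Lemma~\ref{hhh}(b) (for $\phi(u)\in (0,1)$ this is a fortiori true since $F(\phi(u))=B(\phi(u))C^\nu(\phi(u))$ with $B\ge 1$ and $C\neq 0$ on the positive axis by Lemma~\ref{ddd}), strict convexity of $\psi$ at $u$ is Lemma~\ref{chooseu}, and $\Psi_{k,N}(\phi(u))$ is bounded away from zero uniformly in $\e$ by Lemma~\ref{psi} in either case \eqref{case1} or \eqref{case2} (since in case \eqref{case2} the exponent $-w\nu p_1^{-(\lambda-1)x}S(\phi(u))$ is bounded). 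Since $u\in[u_*,u^*]$, all these quantities lie in compact sets bounded away from their singularities, so $\Phi_{k,N}$ is bounded away from zero uniformly in $\e$, closing the argument.
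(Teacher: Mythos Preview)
Your proposal is correct and follows essentially the same approach as the paper: combine \eqref{red} with the inversion formula \eqref{conour}, split the integral into the four pieces handled by Lemmas~\ref{part1}--\ref{part3}, and conclude by checking that $\Phi_{k,N}$ (equivalently $\Psi_{k,N}$) stays bounded away from zero via Lemma~\ref{psi}. The only difference is cosmetic: you spell out the ratio comparisons and the factor-by-factor verification that $\Phi_{k,N}$ is bounded below, whereas the paper compresses this into a one-line appeal to Lemma~\ref{psi}.
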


\begin{proof}
Combining~\eqref{red} and~\eqref{conour} we obtain 
\begin{align*}
\P\{\mathcal{K}>k, \mathcal{W}<\e\}
&=q_{\nu}^{k+1}p_1^{\frac{\nu k (k+1)}{2}}\frac{1}{2\pi}\int_{-\infty}^{\infty}\frac{e^{\e a^N(u-i t)}-1}{u-i t}\,
\phi_{k}\big((u-i t)a^N\big)\, d t. 
\end{align*}
Let us split the integral into the sum of the three integrals corresponding to keeping $e^{\e a^N(u-i t)}$
in the numerator and integrating over $[-\alpha,\alpha]$, $\{t:|t|\in [\alpha,\beta]\}$, and $\{t:|t|\ge \beta\}$,
respectively, and the integral corresponding to keeping $-1$ in the numerator and integrating over $\R$. 
Lemma~\ref{part1} gives the asymptotics of the first integral, while Lemmas \ref{part2} and~\ref{part3}
imply that the remaining three integrals are negligible since
$\Psi_{k,N}$ is bounded away from zero and infinity by Lemma~\ref{psi}. Substituting the asymptotics for the first integral given 
by Lemma~\ref{part1} we arrive at the required formula. 
\end{proof}

\medskip

\begin{proof}[Proof of Theorem~\ref{main1}] We use Proposition~\ref{prop} with $k=-1$ as 
$$\P\{\mathcal{W}<\e\}=\P\{\mathcal{K}>-1, \mathcal{W}<\e\}.$$ 
By Lemma~\ref{psi} we have 
\begin{align*}
\Phi_{-1,N}\sim \frac{\phi_*(u)F(\phi(u))}{u\sqrt{2\pi \nu\psi''(u)}}.
\end{align*} 
Using~\eqref{rho}
we get 
\begin{align*}
u\e a^N&=u\e a^{\rho-\{\rho\}}=u\rho a^{-\{\rho\}}\\
\nu N\log S(\phi(u))&=\nu\rho \log S(\phi(u))-\nu\{\rho\} \log S(\phi(u))\\
\log N&=\log\rho +o(1)
\end{align*}
Further, 
\begin{align*}
q_{\nu}^{N}&=\exp\big\{\rho\log q_{\nu}-\{\rho\}\log q_{\nu}\big\}\\
p_1^{\frac{\nu N(N+1)}{2}}&=\exp\Big\{\frac{\nu\log p_1}{2}\Big(\rho^2+\rho\big(1-2\{\rho\}\big)
+\{\rho\}^2-\{\rho\}\Big)\Big\}.
\end{align*}
Substituting all of the above into~\eqref{prop1}  we obtain the required asymptotics~\eqref{asy0} with
\begin{align*}
M_1(\omega)&=\frac{1}{\log a}\hat M_1\Big(\frac{\omega}{\log a}\Big),\\
M_2(\omega)&=\frac{1}{2}\log\log a+\hat M_2\Big(\frac{\omega}{\log a}\Big),
\end{align*}
where 
\begin{align*}
\hat M_1(\rho)&=\frac{\nu\log p_1}{2}\big(1-2\{\rho\}\big)+ua^{-\{\rho\}}+\nu\log S(\phi(u))+\log q_{\nu}\\
\hat M_2(\rho)&=\log\Big[\frac{\phi_*(u)F(\phi(u))}{u\sqrt{2\pi \nu\psi''(u)}}\Big]+\frac{\nu\log p_1}{2}\big(\{\rho\}^2-\{\rho\}\big)-\nu\{\rho\} \log S(\phi(u))-\{\rho\}\log q_{\nu},
\end{align*}
which are bounded periodic functions of $\rho$ with period one since $u$ is a bounded function of $\{\rho\}$ by~\eqref{defu}. 
\end{proof}
\medskip

\begin{proof}[Proof of Theorem 2] Let $x\in \Z$. Using Proposition~\ref{prop}
both with $k=-1$ and $k\equiv k(\e)=\lfloor\gamma\rfloor+x$ we obtain 
\begin{align*}
\P\big\{\mathcal{K}>\lfloor\gamma\rfloor+x \, \big| \, \mathcal{W}<\e\big\}
&=\frac{\P\{\mathcal{K}>\lfloor\gamma\rfloor+x, \mathcal{W}<\e\}}{\P\{\mathcal{K}>-1,\mathcal{W}<\e\}}
\sim \frac{\Phi_{\lfloor\gamma\rfloor+x,N}}{\Phi_{-1,N}}\\
&=\frac{\Psi_{\lfloor\gamma\rfloor+x,N}(\phi(u))}{\Psi_{-1,N}(\phi(u))}
\sim \exp\Big\{-w \nu p_1^{-(\lambda-1)x}S(\phi(u))\Big\}
\end{align*}
by Lemma~\ref{psi}. This implies~\eqref{c1} and~\eqref{c2} with 
\begin{align*}
c_1=\nu \liminf_{\e\downarrow 0} \big[wS(\phi(u))\big]
\qquad\text{ and }\qquad
c_2=\nu \limsup_{\e\downarrow 0} \big[wS(\phi(u))\big]
\end{align*}
which are both positive and finite since $S\circ \phi$ is continuous on $[u_*,u^*]$ and $w$
is bounded away from zero and infinity by Lemma~\ref{psi}. 
\end{proof}
\bigskip

\end{document}